\newcommand*{\barfix}[2][.175ex]{%
  \mathpalette{\@barfix{#1}}{#2}%
}
\newcommand*{\@barfix}[3]{%
  \vbox{%
    \kern#1\relax
    \hbox{$#2#3\m@th$}%
  }%
}
\newtheorem{theorem}{Theorem}
\newtheorem{thm}{Theorem}[section]
\newtheorem{lemma}[thm]{Lemma}
\newtheorem{proposition}[thm]{Proposition}
\newtheorem{remark}[thm]{Remark}
\newtheorem{question}[thm]{Question}
\newcommand{\footremember}[2]{%
    \footnote{#2}
    \newcounter{#1}
    \setcounter{#1}{\value{footnote}}%
}
\title{\vspace{-1.5cm}Climbing up a random subgraph of the hypercube} 
\author{%
Michael Anastos \footremember{alley}{\scriptsize{ Institute of Science and Technology Austria (ISTA), Klosterneurburg 3400, Austria. Email:  michael.anastos@ist.ac.at. Research supported by the European Union’s Horizon 2020 research and innovation programme under the Marie Sk\l{}odowska-Curie grant agreement No.\ 101034413 \includegraphics[width=4.5mm, height=3mm]{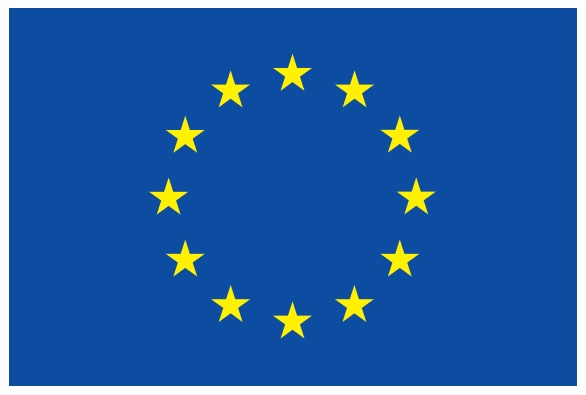} }}%
\and Sahar Diskin \footremember{alley2}{\scriptsize{School of Mathematical Sciences, Tel Aviv University, Tel Aviv 6997801, Israel. Email: sahardiskin@mail.tau.ac.il.}}%
\and Dor Elboim \footremember{trailer}{\scriptsize{School of Mathematics, Institute for Advanced Study, Princeton, N.J. 08540, U.S.A.. Email: dorelboim@gmail.com.}}%
\and Michael Krivelevich \footremember{trailer2}{\scriptsize{School of Mathematical Sciences, Tel Aviv University, Tel Aviv 6997801, Israel. Email: krivelev@tauex.tau.ac.il.}}%
}
\begin{document}
\maketitle
\vspace{-2em}
\begin{abstract}
    Let $Q^d$ be the $d$-dimensional binary hypercube. We say that $P=\{v_1,\ldots, v_k\}$ is an increasing path of length $k-1$ in $Q^d$, if for every $i\in [k-1]$ the edge $v_iv_{i+1}$ is obtained by switching some zero coordinate in $v_i$ to a one coordinate in $v_{i+1}$. 
    
    Form a random subgraph $Q^d_p$ by retaining each edge in $E(Q^d)$ independently with probability $p$. We show that there is a phase transition with respect to the length of a longest increasing path around $p=\frac{e}{d}$. Let $\alpha$ be a constant and let $p=\frac{\alpha}{d}$. When $\alpha<e$, then there exists a $\delta \in [0,1)$ such that \textbf{whp} a longest increasing path in $Q^d_p$ is of length at most $\delta d$. On the other hand, when $\alpha>e$, \textbf{whp} there is a path of length $d-2$ in $Q^d_p$, and in fact, whether it is of length $d-2, d-1$, or $d$ depends on whether the all-zero and all-one vertices percolate or not.
\end{abstract}

\section{Introduction}
\subsection{Background and main result}
The $d$\textit{-dimensional hypercube}, $Q^d$, is the graph whose vertex set is $V(Q^d)\coloneqq \{0,1\}^d$, and where two vertices are adjacent if they differ in exactly one coordinate. The hypercube and its subgraphs rise naturally in many contexts and have received much attention in combinatorics, probability, and computer science. 

The \textit{random subgraph }$Q^d_p$ is obtained by retaining each edge in $E(Q^d)$ independently with probability $p$. Several phase transitions have been observed in $Q^d_p$. Indeed, the study of $Q^d_p$ was initiated by Sapo\v{z}enko \cite{S67} and Burtin \cite{B67}, who showed that the sharp threshold for connectivity is $\frac{1}{2}$: when $p<\frac{1}{2}$, \textbf{whp}\footnote{With high probability, that is, with probability tending to one as $d$ tends to infinity.} $Q^d_p$ is disconnected, whereas for $p>\frac{1}{2}$, \textbf{whp} $Q^d_p$ is connected. This result was subsequently strengthened by Erd\H{o}s and Spencer \cite{ES79} and by Bollob\'as \cite{B83}. Bollob\'as further showed \cite{B90} that $p=\frac{1}{2}$ is the threshold for the existence of a perfect matching. Recently, resolving a long-standing open problem, Condon, Espuny D\'iaz, Gir\~ao, K\"uhn, and Osthus \cite{CDGKO21} showed that $p=\frac{1}{2}$ is also the threshold for the existence of a Hamilton cycle in $Q^d_p$.

In the sparser regime, Erd\H{o}s and Spencer conjectured that $Q^d_p$ undergoes a phase transition with respect to its component structure around $p=\frac{1}{d}$, similar to that of $G(n,p)$ around $p=\frac{1}{n}$. This conjecture was confirmed by Ajtai, Koml\'os, and Szemer\'edi \cite{AKS81}, with subsequent work by Bollob\'as, Kohayakawa, and \L{}uczak \cite{BKL92}. Given $\alpha>1$, let $\zeta_{\alpha}$ be the unique solution in $(0,1)$ of the equation
\begin{align}\label{survival prob}
    \zeta_{\alpha}=1-\exp(-\alpha\zeta_{\alpha}).
\end{align}
Then, Ajtai, Koml\'os, and Szemer\'edi \cite{AKS81}, and Bollob\'as, Kohayakawa, and \L{}uczak \cite{BKL92} showed that when $p=\frac{1-\epsilon}{d}$, for $\epsilon>0$, \textbf{whp} all components of $Q^d_p$ have order $O_{\epsilon}(d)$, and when $p=\frac{1+\epsilon}{d}$ \textbf{whp} $Q^d_p$ contains a unique giant component of asymptotic order $\zeta_{1+\epsilon}2^d$, and all other components have order $O_{\epsilon}(d)$.\footnote{We use the convention $f(x)=O_{r}(g(x))$ to say that there exist a constant $C$, which may depend on $r$, such that for all sufficiently large $x$ we have $f(x)\le C|g(x)|$.} We note that $\zeta_{\alpha}$ is equal to the survival probability of a Galton-Watson tree with offspring distribution Po$(\alpha)$.

In this paper, we show a new phase transition in the hypercube, which occurs when $p=\frac{e}{d}$. Before stating our result, let us introduce some notation. For a vertex $v\in Q^d$, we denote by $I(v)\subseteq [d]$ the set of coordinates of $v$ which are $1$. For $v_1, v_2\in Q^d$, we say that $v_1<v_2$ if $I(v_1)\subsetneq I(v_2)$. Given a path $P=\{v_1, \ldots, v_k\}$ in $Q^d$, we say that $P$ is an \textit{increasing path} of length $k-1$ in the hypercube, if for every $i\in [k-1]$, $|I(v_{i+1})|-|I(v_i)|=1$. Note that the longest increasing path in $Q^d$ is of length $d$. Given a subgraph $H\subseteq Q^d$, let $\ell(H)$ be the length of a longest increasing path in $H$. Our result is as follows.
\begin{theorem}\label{th: main}
Let $\alpha$ be a constant, and let $p=\frac{\alpha}{d}$. Then, the following holds.
\begin{enumerate}[(a)]
    \item For every $\alpha<e$, there exists $\delta\coloneqq \delta(\alpha)$, $0\le \delta<1$, such that \textbf{whp} $\ell(Q^d_p)\le \delta d$. \label{subcritical}
    \item For every $\alpha>e$, \textbf{whp} $\ell(Q^d_p)\ge d-2$. Furthermore, \label{supercritical}
    \begin{align*}
        &\mathbb{P}\big( \ell(Q^d_p)=d\big) =(1+o_d(1))\zeta_{\alpha}^2,\\
        &\mathbb{P}\big( \ell(Q^d_p)=d-1\big) =(1+o_d(1))2\zeta_{\alpha}(1-\zeta_{\alpha}),
    \end{align*}
    where $\zeta_{\alpha}$ is defined according to \eqref{survival prob}.
\end{enumerate}
\end{theorem}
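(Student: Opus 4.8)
\textit{Proof proposal.}

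The plan for part (a) is a first-moment bound. For $c\in(0,1)$ let $X_c$ be the number of increasing paths of length $\lceil(1-c)d\rceil$ in $Q^d_p$. An increasing path of length $\ell$ is determined by a starting vertex $v_1$, at some level $i$, together with an ordered list of $\ell$ coordinates outside $I(v_1)$, and it survives in $Q^d_p$ with probability $p^\ell$; hence $\mathbb{E}[X_c]=p^{\lceil(1-c)d\rceil}\sum_{i\ge 0}d!/\big(i!\,(d-i-\lceil(1-c)d\rceil)!\big)=(1+o(1))\tfrac{d!}{(cd)!}\,2^{cd}\,p^{(1-c)d}$, using $\sum_i\binom{cd}{i}=2^{cd}$. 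Substituting $p=\alpha/d$ and applying Stirling's formula, $\mathbb{E}[X_c]=\exp\big(d\,\psi(c)+O(\log d)\big)$ with $\psi(c)=(1-c)\ln(\alpha/e)-c\ln c+c\ln 2$; as $c\to 0^+$ one has $\psi(c)\to\ln(\alpha/e)<0$ since $\alpha<e$, so we may fix a constant $c>0$ with $\psi(c)<0$. Then $\mathbb{E}[X_c]\to 0$, Markov's inequality shows that \textbf{whp} $Q^d_p$ contains no increasing path of length $\lceil(1-c)d\rceil$, and since the initial segment of any longer increasing path contains such a path, \textbf{whp} $\ell(Q^d_p)<(1-c)d$; thus $\delta=1-c/2$ works. (The value $e$ arises here because this expectation is of order $(\alpha/e)^d$ up to polynomial factors.)

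For part (b) the plan is to analyse separately the two antipodal tips $\mathbf{0}$ and $\mathbf{1}$ and the bulk. Fix a small constant $c>0$ (depending on $\alpha$) and say $\mathbf{0}$ \emph{percolates up} if $Q^d_p$ has an increasing path from $\mathbf{0}$ to level $\lceil cd\rceil$, with ``$\mathbf{1}$ percolates down'' defined symmetrically via decreasing paths down to level $\lfloor(1-c)d\rfloor$. The first ingredient is a branching-process coupling: at a vertex of level $j\le cd$ the upward exploration from $\mathbf{0}$ discovers a number of new up-neighbours distributed as $\mathrm{Bin}(d-j-O(\cdot),p)$, which is dominated above by $\mathrm{Bin}(d,p)\approx\mathrm{Po}(\alpha)$ and, for the first $O(1)$ generations, also dominated below by a distribution tending to $\mathrm{Po}(\alpha)$; coupling with a Galton--Watson tree of offspring distribution $\mathrm{Po}(\alpha)$ (and then pushing the surviving vertices up to level $cd$) gives $\mathbb{P}(\mathbf{0}\text{ percolates up})\to\zeta_\alpha$, and likewise for $\mathbf{1}$. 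These two events depend on disjoint sets of edges (those between levels $<cd$, respectively between levels $\ge(1-c)d$, using $c<1/2$), so they are exactly independent; consequently both tips percolate with probability $(1+o(1))\zeta_\alpha^2$, exactly one with probability $(1+o(1))2\zeta_\alpha(1-\zeta_\alpha)$, and neither with probability $(1+o(1))(1-\zeta_\alpha)^2$. A second-moment variant of the same exploration (controlling the mild dependence between the up-explorations of the various level-$1$ vertices, which typically remain vertex-disjoint) shows that \textbf{whp} \emph{some} level-$1$ vertex percolates up, and symmetrically that \textbf{whp} some level-$(d-1)$ vertex percolates down.

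The second ingredient, and the one I expect to be the main obstacle, is a \emph{bulk connectivity} statement: for $c>0$ small enough that $(1-2c)\alpha>e$, \textbf{whp} every vertex $u$ at level $\lceil cd\rceil$ reachable from below by an increasing path can be extended, by an increasing path, up to some vertex of level $d-1$, and all the way to $\mathbf{1}$ whenever $\mathbf{1}$ percolates down. This rests on a first-moment computation: for a fixed compatible pair $u<w$ with $u$ at level $cd$ and $w$ at level $(1-c)d$, the expected number of increasing $u$--$w$ paths equals $((1-2c)d)!\,p^{(1-2c)d}=\exp\big((1-2c)d\ln((1-2c)\alpha/e)+O(\log d)\big)$, which is exponentially large exactly because $(1-2c)\alpha/e>1$; one then needs a second-moment (or iterative sprinkling) argument to upgrade this to \textbf{whp} existence of such a path for a fixed pair, and a separate analysis of the ``funnel'' near level $d$, where upward degrees fall below $1$ — this is the precise place where $\alpha>e$ is needed, since the set of level-$(1-c)d$ vertices reachable from $u$ has size $\exp\big((\ln\alpha-1+o_c(1))d\big)$, which is large enough to push through the funnel to level $d-1$ iff $\alpha>e$. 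One further has to check that the (exponential-size) sets of level-$\lceil cd\rceil$ and level-$(d-1)$ vertices reached from the bottom, respectively from the top, are spread out enough to contain a compatible pair $u<w$ (equivalently $I(u)\subseteq I(w)$); this should follow from an entropy/union-bound estimate but is delicate to make clean.

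Granting these two ingredients, the three cases follow by assembly. An increasing path of length $d$ must go from $\mathbf{0}$ to $\mathbf{1}$, so it forces both tips to percolate; conversely, conditioned on both tips percolating, bulk connectivity \textbf{whp} produces an increasing $\mathbf{0}$--$\mathbf{1}$ path, so $\mathbb{P}(\ell(Q^d_p)=d)=(1+o(1))\zeta_\alpha^2$. An increasing path of length $\ge d-1$ forces at least one tip to percolate; conversely, if (say) $\mathbf{0}$ percolates up then bulk connectivity \textbf{whp} yields a path from $\mathbf{0}$ to level $d-1$, so $\mathbb{P}(\ell(Q^d_p)\ge d-1)=1-(1+o(1))(1-\zeta_\alpha)^2$, and subtracting the previous line gives $\mathbb{P}(\ell(Q^d_p)=d-1)=(1+o(1))2\zeta_\alpha(1-\zeta_\alpha)$. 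Finally, since \textbf{whp} some level-$1$ vertex percolates up and some level-$(d-1)$ vertex percolates down, bulk connectivity \textbf{whp} joins them by an increasing path from level $1$ to level $d-1$, so \textbf{whp} $\ell(Q^d_p)\ge d-2$. The branching-process parts are routine; the real work is the bulk connectivity lemma — in particular the second-moment estimate for increasing paths across a positive-density band of levels and the control of the top funnel — together with the bookkeeping needed to keep the constants $\zeta_\alpha^2$ and $2\zeta_\alpha(1-\zeta_\alpha)$ exact.
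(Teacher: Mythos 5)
Your part (a) is correct and is essentially the paper's argument: the same first-moment count of increasing paths of a fixed linear length (your parametrisation by $c=1-\delta$ and the function $\psi(c)$ is just a rewriting of the paper's Stirling computation), so nothing to add there.

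For part (b) the decomposition you propose -- tip events ``$\mathbf{0}$ percolates up'' / ``$\mathbf{1}$ percolates down'' with probabilities $(1+o(1))\zeta_\alpha$, independence of the two tips, plus a bulk statement joining reachable vertices at level $cd$ to reachable vertices at level $(1-c)d$ -- matches the paper's outline, but the bulk ingredient, which you yourself flag as the main obstacle, is both unproved and, in the form you propose, unobtainable. For a \emph{fixed} compatible pair $u<w$ there is no \textbf{whp} increasing $u$--$w$ path: with probability bounded away from $0$ (at least $e^{-(1-2c)\alpha}$) the vertex $u$ has no present up-edge inside $Q(u,w)$ at all, and in fact the crossing probability of that subcube is $(1+o(1))\zeta_{(1-2c)\alpha}^2$, bounded away from $1$. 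Correspondingly, the second moment of the number of increasing $u$--$w$ paths is \emph{not} $(1+o(1))$ times the square of the first moment (it exceeds it by a polynomial factor coming from paths sharing initial/final segments), so Paley--Zygmund yields only an inverse-polynomial lower bound -- this is exactly the paper's Lemma 4.1, which gives $\mathbb{P}(\text{crossing})\ge d^{-5}$ and no more. Likewise your statement that \emph{every} reachable level-$cd$ vertex extends to level $d-1$ is false, since each such extension dies out with constant probability and there are exponentially many candidates. To boost an inverse-polynomial (or constant) single-pair bound to a \textbf{whp} statement one needs many \emph{independent} trials, i.e.\ many vertex-disjoint almost-full-dimensional subcubes whose bottom corners are nonetheless attached to $\mathbf{0}$ and whose top corners are attached to $\mathbf{1}$; crossing events for different pairs drawn from the same reachable sets share edges, so your ``entropy/union-bound over compatible pairs'' does not address this dependence. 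Constructing those $d^{10}$ disjoint attached subcubes is the actual content of the paper (the Tree Construction algorithm with ``easily distinguishable'' leaves and the coordinate-lists $C_T(v,0)$, $C_T(v,1)$, Lemmas 4.2--4.4, including the trick of growing the lower tree on the first $d/2$ coordinates and the upper tree on the last $d/2$ to keep one above the other), and none of that mechanism, nor a substitute for it, appears in your proposal. The same gap affects your $d-1$ and $d-2$ assembly, which also relies on the bulk claim.
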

In a sense, the above shows that for $\alpha>e$ \textbf{whp} the path is of length at least $d-2$, and whether it is of length $d-2, d-1$, or $d$ depends on whether the all-$0$-vertex and the all-$1$-vertex `percolate', that is, if the branching process rooted at the vertices survives for long enough (with respect to $d$). 

Let us mention a related result of Pinsky \cite{R13}, who considered (among other things) the \textit{number} of increasing paths of length $d$ in $Q^d_p$, where $p=\frac{\alpha}{d}$. In \cite{R13}, he showed that if $\alpha<e$, then \textbf{whp} the number of such paths is $0$. Theorem \ref{th: main}\ref{subcritical} shows that, in fact, when $\alpha<e$, typically a longest path is smaller by a multiplicative constant. Pinsky further showed that if $\alpha>e$, then the probability there are any such paths is bounded away from $0$ and $1$. Theorem \ref{th: main}\ref{supercritical} gives a detailed description of the typical length of a longest path when $\alpha>e$ (see also Remark \ref{r: distribution}).

We finish this section by noting that while some of our lemmas extend to the case where $\alpha=e$, our overall proof does not. It would be interesting to see what the behaviour is at this critical point.
\begin{question}
Let $p=\frac{e}{d}$. Form $Q^d_p$ by retaining each edge of $Q^d$ independently with probability $p$. What can be said about a longest increasing path in $Q^d_p$?
\end{question}

The structure of the paper is as follows. In the subsequent subsection, Section \ref{s: outline}, we give an outline of the proof of our main theorem. In Section \ref{s: notation} we collect some notation and lemmas which will be of use throughout the proof. Section \ref{s: subcritical} is devoted to the proof of Theorem \ref{th: main}\ref{subcritical}, and Section \ref{s: supercritical} is devoted to the proof of Theorem \ref{th: main}\ref{supercritical}.

\subsection{Proof outline}\label{s: outline}
The proof of Theorem \ref{th: main}\ref{subcritical} follows from a first moment argument. The proof of Theorem \ref{th: main}\ref{supercritical}, on the other hand, is far more delicate.

For proving Theorem \ref{th: main}\ref{supercritical}, we begin by showing, through a careful second-moment argument, that it is not \textit{very} unlikely to have a path between the all-$0$-vertex and the all-$1$-vertex in $Q^d_p$ --- in fact, we give a lower bound for the probability of that event which is inverse-polynomial in $d$. Then, our goal is to show that if the all-$0$-vertex and the all-$1$-vertex `percolate', then  \textbf{whp} we can find polynomially many vertex disjoint subcubes, so that the $0$ antipodal point of each of the subcubes is connected by a path in $Q^d_p$ to the all-$0$-vertex in $Q^d$, and the $1$ antipodal point of each of the subcubes is connected by a path in $Q^d_p$ to the all-$1$-vertex in $Q^d$. Since these subcubes are vertex disjoint, these events are independent for different subcubes. Thus, since the probability that in these subcubes there is an increasing path between the $0$ antipodal point and the $1$ antipodal point in $Q^d_p$ is at least inverse polynomial in $d$, having polynomially many such subcubes we will be able to conclude that \textbf{whp} in at least one of these subcubes there is an increasing path from the $0$ antipodal point to the $1$ antipodal point, which then extends to an increasing path between the all-$0$-vertex and the all-$1$-vertex in $Q^d_p$.

Finding these subcubes is the most involved part of the paper, and therein lie several novel ideas. We note that key parts of this lie in the Tree Construction algorithm and its properties, given in Section \ref{s: bfs}. 

Roughly, we show that if the all-$0$-vertex `percolates', then \textbf{whp} we can construct a `good' tree in $Q^d_p$, denote it by $T_0$, which is rooted at the all-$0$-vertex. We note that $T_0$ is monotone increasing, that is, the layers of the tree correspond to the layers of the hypercube. We construct this tree so that it has polynomially in $d$ many leaves, all of which reside in a relatively low layer (that is, $O(\log d)$). Furthermore, we assign a set of coordinates $C_v$ to every leaf $v\in V(T_0)$, where $C_v\cap I(v)=\varnothing$. While constructing this tree, we ensure that this list is of size at most polylogarithmic in $d$, and that for every other leaf $v'\in V(T_0)$, there is some $i\in C_v$ such that $i\notin I(v)$ and $i\in I(v')$ (we informally say that the leaves of the tree are `easily distinguishable'). Let us denote by $V_0$ the set of leaves of $T_0$ (see Figure \ref{f: first outline}).

Then, we show that if the all-$1$-vertex `percolates', then \textbf{whp} we can construct a `good' tree in $Q^d_p$, denote it by $T_1$, which is rooted at the all $1$-vertex. Here $T_1$ is monotone decreasing, that is, the layers of the tree correspond (in decreasing order) to the layers of the hypercube. The set of leaves of this tree, $V_1$, resides in a relatively high layer (that is, $d-O(\log d)$), and we have that $|V_1|=d|V_0|$. Moreover, we will construct the tree so that all of its vertices are above the vertices of $T_0$, that is, for every $v_0\in V_0$ and $v_1\in V_1$, we have that $I(v_0)\subset I(v_1)$. While this might seem a stringent requirement, note that given $p=\frac{\alpha}{d}$ with $\alpha>e$, we have that $p\cdot\frac{d}{2}>1$, and thus we can restrict the growth of the trees to half of the coordinates while remaining supercritical. Indeed, at a certain point through the construction of the trees, we will grow $T_0$ only on the first $\frac{d}{2}$ coordinates, and $T_1$ only on the last $\frac{d}{2}$ coordinates. After constructing $T_1$, we will arbitrarily associate disjoint sets of $d$ vertices in $V_1$ to every vertex in $V_0$ (see Figure \ref{f: first outline}).

\begin{figure}[H]
\centering
\includegraphics[width=0.65\textwidth]{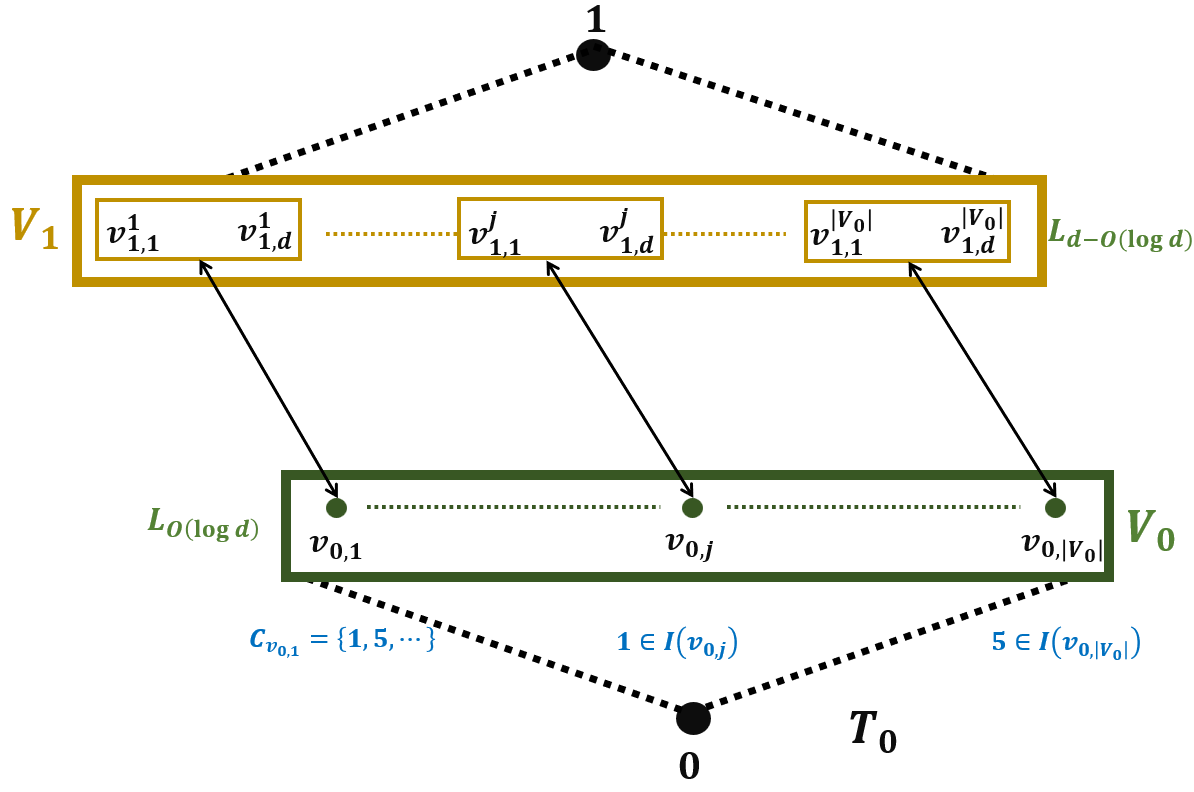}
\caption{Illustration of $T_0$, rooted at the all-$0$-vertex, and $T_1$, rooted at the all-$1$-vertex. The list of coordinates $C_{v_{0,1}}$ associated with $v_{0,1}$ is displayed, together with specific coordinates for the $j$-th and last vertex in $V_0$ which are in $C_{v_{0,1}}$. We stress that $C_{v_{0,1}}\cap I(v_{0,1})=\varnothing$. The disjoint subsets of $d$ vertices inside $V_1$ which are associated with every vertex in $V_0$ are displayed, with an arrow connecting them according to their association.}
\label{f: first outline}
\end{figure}

Since the leaves of $T_0$ are easily distinguishable, we utilise a projection-type argument to argue that given a set of, say, $d$ leaves in $T_1$ and a vertex $v_0\in V(T_0)$, \textbf{whp} we can grow in $Q^d_p$ a tree of small height, rooted at one of these leaves, such that it has a vertex $v_1$ which is above $v_0$, but is not above any other leaf of $T_0$. We do so by growing a tree, rooted at one of the $d$ leaves which were assigned to $v_0$ (which is fixed), so that all its leaves are above $v_0$, and to a sufficient height such that for every $i\in C_{v_0}$, we have that $i\in v_1$. This, in turn, will allow us to find pairs of vertices, which will form the antipodal points of the vertex disjoint subcubes we seek to construct.
\section{Preliminaries}\label{s: notation}
We begin with some notation and terminology which will be of use for us throughout the paper. Given a graph $G$, a subset $S\subseteq V(G)$, and a vertex $v\in V(G)$, we let $N_S(v)\coloneqq N(v)\cap S$, that is, the neighbourhood of $v$ in $S$. With a slight abuse of notation, given a subgraph $H\subseteq G$ and $v\in V(G)$, we define $N_H(v)\coloneqq N_{V(H)}(v)$. 

Recall that for every $v\in Q^d$, we denote by $I(v)\subseteq [d]$ the set of coordinates which are $1$. Furthermore, for $v_1, v_2\in Q^d$, we say that $v_1\le v_2$ if $I(v_1)\subseteq I(v_2)$, and $v_1<v_2$ if $I(v_1)\subsetneq I(v_2)$. Given a path $P=\{v_1, \ldots, v_k\}$ in $Q^d$, we say that $P$ is an \textit{increasing path} in the hypercube, if for every $i\in [k-1]$, $v_i<v_{i+1}$. Also, given a set of vertices $S\subseteq V(Q^d)$, let $I(S)\coloneqq \cup_{v\in S}I(v)$. 

Given two vertices $u, v\in Q^d$ such that $uv\in E(Q^d)$, we denote by $c(u,v)$ the coordinate in $[d]$ in which these two vertices differ. Given a tree $T$ rooted in $r\in V(Q^d)$ and a vertex $v\in V(T)$, let
\begin{align*}
    C_T(v)\coloneqq\big\{c(x,w)\colon x\neq v, xw\in E(T), x\text{ is on the path } P \text{ from } r \text{ to } v \text{ in } T, w\notin P \big\}.
\end{align*}
\begin{figure}[H]
\centering
\includegraphics[width=0.85\textwidth]{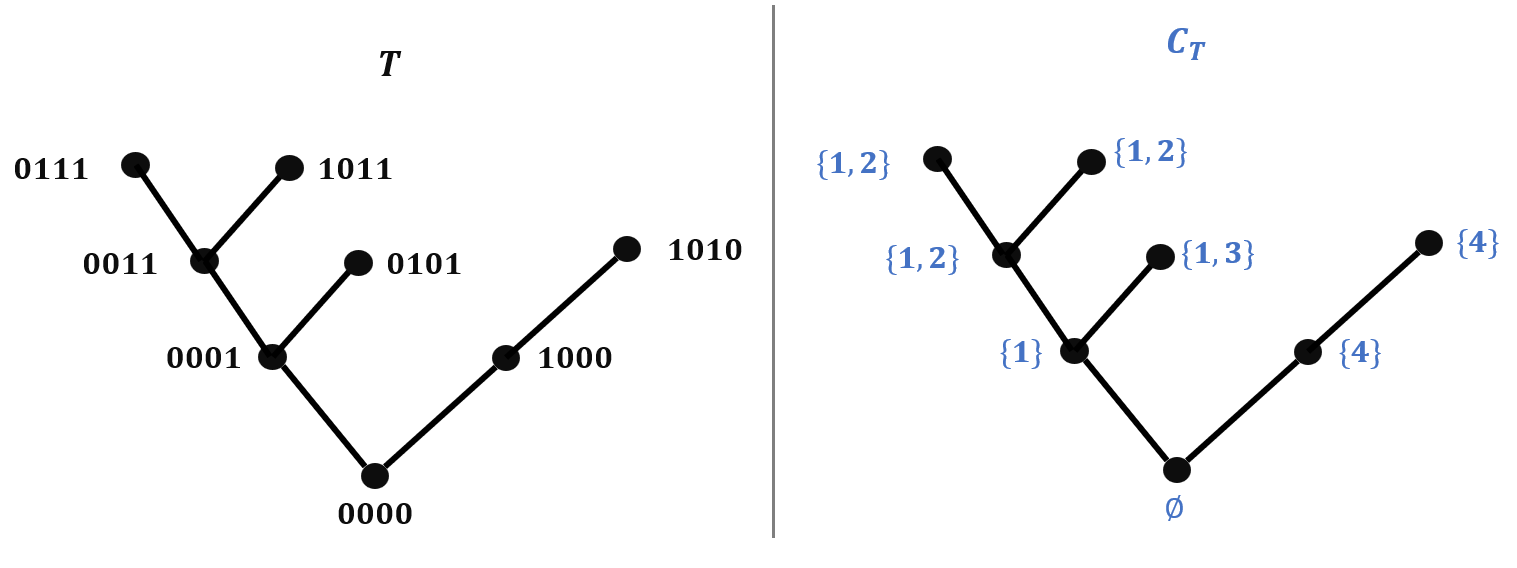}
\caption{Illustration of $C_T(v)$ for a fixed tree $T$ in $Q^4$, rooted at the all-$0$-vertex. On the left side, the tree $T$ and its vertices are presented. On the right side, the values of $C_T(v)$ for every vertex of $T$ appear in blue.}
\label{f: sat_C_0}
\end{figure}

We stress that all the trees throughout the paper are monotone, that is, given a tree $T$ rooted at $r\in V(Q^d)$, then either $r<v_1<\ldots<v_k$ for every path $\{r, v_1, \ldots, v_k\}$ of $T$, or $r>v_1>\ldots>v_k$ for every path $\{r,v_1,\ldots,v_k\}$ of $T$.

We will further make use of $C_T(v,0)\coloneqq C_T(v)\cap \left([d]\setminus I(v)\right)$ and $C_T(v,1)\coloneqq C_T(v)\cap I(v)$.

Given $v_1, v_2\in V(Q^d)$ such that $v_1\le v_2$, let $Q(v_1, v_2)$ be the induced subcube of $Q^d$ whose vertex set is given by
\begin{align*}
    V(Q(v_1, v_2))\coloneqq \left\{u\colon v_1\le u\le v_2 \right\}.
\end{align*}
Note that $Q(v_1, v_2)$ is isomorphic to a hypercube of dimension $|I(v_2)|-|I(v_1)|$. Furthermore, note that given $u_1, u_2$ and $v_1, v_2$, we have that $Q(u_1, u_2)$ and $Q(v_1, v_2)$ are guaranteed to be vertex disjoint if $I(u_1)\not\subseteq I(v_2)$, equivalently if $I(u_1)\cap ([d]\setminus I(v_2))\neq \varnothing$. 

We denote by $\textbf{0}$ the all-$0$-vertex in $Q^d$, and by $\textbf{1}$ the all-$1$-vertex in $Q^d$. For $i\in [0,d]$, we denote by $L_i$ the $i$-th layer in the hypercube, that is, the set of vertices with exactly $i$ ones.

Throughout the paper, we omit rounding signs for the sake of clarity of presentation. All the logarithms are assumed to be the natural logarithm.

We will use the following Chernoff-type bounds on the tail probabilities of the binomial distribution (see, for example, \cite[Appendix A]{AS16}).
\begin{lemma}\label{l: chernoff}
Let $N\in \mathbb{N}$, let $p\in [0,1]$ and let $X\sim Bin(N,p)$.
\begin{itemize}
    \item For every positive $t$, $$\mathbb{P}(X>tNp)\le \left(\frac{e}{t}\right)^{tNp}.$$
    \item For every $0<b\le \frac{Np}{2}$, $$\mathbb{P}(X<Np-b)\le \exp\left(-\frac{b^2}{4Np}\right).$$
\end{itemize}
\end{lemma}

\subsection{Exploring a tree with `easily distinguishable' leaves}\label{s: bfs}
We will utilise a heavily modified variant of the Breadth First Search (BFS) algorithm, which we will refer to as the \textit{Tree Construction algorithm}.
The Tree Construction algorithm is fed with the following as input.
\begin{enumerate}[label=(In\arabic*)]
    \item A subcube $H\coloneqq Q(u_0, u_1)\subseteq Q^d$, with $u_0\le u_1$, together with an order $\sigma$ on $V(H)$; and, \label{input: subcube}
    \item a vertex $r\in \{u_0, u_1\}$, which will serve as the root; and, \label{input: root} 
    \item a set of coordinates $C\subseteq [d]$, which are to be avoided; and, \label{input: coordinates}
    \item a layer $L_i\subseteq V(Q^d)$, with $|I(u_0)|\le i \le |I(u_1)|$, at which this algorithm is truncated; and, \label{input: layer}
    \item a sequence of independent Bernoulli$(p)$ random variables, $\{X_e\}_{e\in E(H)}$. \label{input: probability}
\end{enumerate}
The algorithm outputs a BFS-type tree $T$, rooted at $v$ in $H$. To that end, the algorithm maintains three sets of vertices: $B$, the set of vertices whose exploration is complete (and are part of the BFS tree); $A$, the active vertices currently being explored, kept in a \textit{queue}; and $Y$, the vertices that have not been explored yet. The algorithm starts with $T$ being the root $r$, $B=\varnothing$, $A=\{r\}$ and $Y=V(H)\setminus \{r\}$.  As long as $A$ is not empty, the algorithm proceeds as follows.

Let $x$ be the first vertex in $A$ and let $j(x)\in [0,d]$ be such that $x$ belongs to the layer $L_{j(x)}$. Let 
\begin{align*}
    C_x\coloneqq \begin{cases}
        C_T(x,0), &\quad \text{if }v=u_0\\
        C_T(x,1), &\quad \text{if }v=u_1
    \end{cases},
\end{align*}
and 
\begin{align*}
    j_x\coloneqq \begin{cases}
       j(x)+1, &\quad \text{if }v=u_0\\
       j(x)-1, &\quad \text{if }v=u_1
    \end{cases}.
\end{align*}

If $L_{j_x}=L_i$ then let $Y_x=\emptyset$. Otherwise, let

$$Y_x\coloneqq \left\{y\in Y\cap L_{j_x}: xy\in E(H) \quad \& \quad c(x,y)\in [d]\setminus \left(C\cup C_x\right) \right\},$$
that is, the set of neighbours of $x$ in $Y\cap L_{j_x}$ that differ from $x$ in a coordinate in $[d]\setminus \left(C\cup C_x\right)$. Then, for $y\in Y_x$,  we query the edge $yx$. For this, we reveal the random variable $X_{xy}$. If $X_{xy}=1$ then the edge $xy$ belongs to $H_p$ (recall that this occurs for each edge $e\in E(H)$ with probability $p$, independently), otherwise it does not. In the case $X_{xy}=1$, if so far we have identified fewer than $\log d$ edges $xy'$ with $y'\in Y_x$, that is, we have had fewer than $\log d$ random variables $X_{xy'}=1$, then we move $y$ from $Y$ to the end of $A$. If we have identified $\log d$ such edges (or if $X_{xy}=0$), we move to the next edge. Once all the edges $xy$ for every $y\in Y_x$ have been queried, we move $x$ from $A$ to $B$. The algorithm terminates once $A$ is empty. It then outputs the tree $T$, rooted at $r$, and spanned by the edges the algorithm has detected. 
 
We will utilise the following properties of the Tree Construction algorithm.
\begin{proposition}\label{proposition: tree construction}
The following properties of the Tree Construction algorithm hold.
\begin{enumerate}[(a)]
    \item If we start with $u_0$, that is $r=u_0$, then for every $u\in V(T)$ we have that $|C_T(u,0)|\le i\cdot \log  d$. Similarly, if we start with $u_1$, that is $r=u_1$, then for every $u\in V(T)$ we have that $|C_T(u,1)|\le (d-i)\log d$. \label{property: list}
    \item If we start with $u_0$, then for every two leaves $w_1,w_2\in V(T)$ we have that $I(w_2)\cap C_T(w_1,0)\neq \varnothing$. Similarly, if we start with $u_1$, then for every two leaves $w_1,w_2\in V(T)$ we have that $([d]\setminus I(w_2))\cap C_T(w_1,1)\neq\varnothing$. \label{property: leaves} 
    \item Suppose that we are at the first moment where $x$ is the first vertex in $A$ and let $A_x,B_x$ be the sets $A,B$ at that moment. Let $y\in N_H(x)\cap L_{j_x}$. If $y\in A_x\cup B_x$ then $c(x,y)\in  C_x$. \label{property: structural observation}
    \item Suppose that $H$ has dimension $(1-o(1))d$, $|C|\le \frac{d}{2}+\sqrt{d}$, the layer $L_i$ is at distance $\ell$ from the root, where $\ell=o(\log^5d), \ell=\omega(1)$, and that $p=\frac{\alpha}{d}$ for some constant $\alpha>e$. Then, with probability at least $(1+o(1))\zeta_{p(d-|C|)}$, where $\zeta_{c}$ is defined as in \eqref{survival prob}, the following holds. The Tree Construction algorithm outputs a tree $T$, such that the number of leaves in the layer $L_{i-1}$ if we start from $u_0$, and in the layer $L_{i+1}$ if we start from $u_1$, is between $\left(p(d-|C|)\right)^{0.9\ell}$ and $\left(pd\right)^{1.1\ell}$. \label{property: queries}
\end{enumerate} 
\end{proposition}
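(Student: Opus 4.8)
The plan is to prove the four parts essentially independently, with parts (a)–(c) being structural bookkeeping about the algorithm, and part (d) being the substantial probabilistic claim.

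For (a): I would track how the set $C_T(u,0)$ grows along the path from the root to $u$. Each time we complete the exploration of a vertex $x$ lying on this path, the algorithm detects at most $\log d$ edges $xy$ with $X_{xy}=1$, and each such edge contributes exactly one coordinate $c(x,y)$ to $C_T(\cdot)$ for descendants of $x$. Since $u$ is in layer $L_{\le i}$ (the algorithm truncates at $L_i$), the path from $u_0$ to $u$ has length at most $i$, so $|C_T(u,0)|\le i\log d$. The symmetric statement for $r=u_1$ follows verbatim, replacing "length at most $i$" by "length at most $d-i$". I should double check that coordinates added by $C_x$ — the coordinates we avoid when exploring $x$ — are already counted, which they are: $C_x=C_T(x,0)$ consists precisely of coordinates $c(x',w')$ for edges off the root-to-$x$ path, so they are the coordinates of previously detected sibling-edges and do not contribute anything new.

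For (b): I would argue that any two leaves $w_1,w_2$ of $T$ have a last common ancestor $z$ on the tree, and from $z$ the two leaves descend via distinct tree-edges $zw_1'$ and $zw_2'$ (the first steps of the two branches). Then $c(z,w_1')\in C_T(w_2,0)$ by definition (since $z$ lies on the root-to-$w_2$ path and $w_1'\notin$ that path), and this coordinate is switched on in $w_1$ but not in $w_2$ — wait, I need it the other way: I want a coordinate in $I(w_2)\cap C_T(w_1,0)$. So take the edge $zw_2'$ on the branch towards $w_2$: $c(z,w_2')\in C_T(w_1,0)$ because $z$ is on the root-to-$w_1$ path and $w_2'\notin$ that path; and since the tree is monotone increasing and $w_2'$ lies below $w_2$, the coordinate $c(z,w_2')\in I(w_2')\subseteq I(w_2)$. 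Hence $I(w_2)\cap C_T(w_1,0)\ne\varnothing$. For $r=u_1$ the tree is monotone decreasing, and the same argument gives a coordinate that is off in $w_2$ but on the path, i.e.\ in $([d]\setminus I(w_2))\cap C_T(w_1,1)$.

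For (c): this is a direct invariant of the BFS queue discipline. At the first moment $x$ becomes the head of $A$, the set $A_x\cup B_x$ consists exactly of the vertices that have already been discovered, which (because the algorithm only ever discovers a vertex via a detected tree-edge) all lie on the tree $T$ as constructed so far, and moreover $x$'s ancestors are all in $B_x$. If $y\in N_H(x)\cap L_{j_x}$ is already in $A_x\cup B_x$, then $y$ was discovered earlier via some edge $x'y$ with $x'$ the parent of $y$; but $y\in L_{j_x}$ means $y$ is in the layer just after $x$ (if $r=u_0$), and its parent $x'$ is in the same layer as $x$. One then checks that the only way $y$ can already be present while being adjacent to $x$ is that $c(x,y)$ was "used up" on the root-to-$x$ path — because otherwise $y$ would still be in $Y$ and eligible in $Y_x$. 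Concretely: if $c(x,y)\notin C_x=C_T(x,0)$ then no ancestor-edge of $x$ switched coordinate $c(x,y)$; tracing the monotone structure, $y$ and $x$ would then have been discovered along branches that only diverge at or after $x$, contradicting $y\in A_x\cup B_x$ at this first moment. Hence $c(x,y)\in C_x$.

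The main obstacle is part (d), the survival estimate, and I would spend most of the effort there. The idea is to couple the Tree Construction algorithm with a Galton--Watson branching process. When we explore a vertex $x$ in layer $L_{j}$ at tree-distance $t<\ell$ from the root, the candidate set $Y_x$ consists of neighbours of $x$ in the next layer whose switching coordinate avoids $C\cup C_x$; since $|C|\le d/2+\sqrt d$ and, by part (a), $|C_x|\le t\log d=o(\log^6 d)=o(d)$, and since only $O(\ell\cdot\log d)=o(d)$ vertices are ever discovered so the "already used" neighbours are negligible, we get $|Y_x|=(1-o(1))(d-|C|)$ uniformly over the first $\ell$ levels. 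Each of these is kept independently with probability $p$, so the number of children of $x$ is distributed as $\mathrm{Bin}\big((1-o(1))(d-|C|),\,p\big)$, capped at $\log d$. Since $p(d-|C|)\ge p\cdot d/2 = \alpha/2$ could be as small as just above... hmm, actually we need $p(d-|C|)>1$: with $|C|\le d/2+\sqrt d$ we get $p(d-|C|)\ge \alpha(1/2-o(1))$, which exceeds $1$ precisely when $\alpha>2$; for $e<\alpha\le 2$ one must instead use that $|C|$ will in the relevant application be genuinely smaller, but as \emph{stated} the hypothesis only gives $|C|\le d/2+\sqrt d$ — so I would either note $d-|C|\ge d/2-\sqrt d$ forces nothing, and re-read: in fact the proposition asserts the conclusion with probability $(1+o(1))\zeta_{p(d-|C|)}$, and $\zeta_c$ is only defined for $c>1$, so implicitly we are in the regime $p(d-|C|)>1$; I would add a line making that explicit. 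Granting $p(d-|C|)=:c>1$ bounded, the capping at $\log d$ changes the offspring distribution by a total-variation $o(1)$ amount over all $O(\ell)$ levels explored (a Chernoff bound, Lemma \ref{l: chernoff}, shows $\mathrm{Po}$-like tails make exceeding $\log d$ children vanishingly rare). Standard branching-process theory then gives: the probability the process is still alive at generation $\ell$ tends to the survival probability $\zeta_c$ (here $\ell=\omega(1)$ ensures "alive at $\ell$" $\to$ "survives"), and conditioned on survival, the generation-$\ell$ size is, by the Kesten--Stigum / concentration-of-supercritical-branching estimates, between $c^{0.9\ell}$ and (using the $\mathrm{Bin}(( 1+o(1))d,p)$ upper bound on offspring, so mean $\le (1+o(1))pd$) $(pd)^{1.1\ell}$ with probability $1-o(1)$. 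Summing the error terms — the coupling error per level, the capping error, the branching-process fluctuation — and using $\ell=o(\log^5 d)$ to keep $\ell\cdot(\text{per-level error})=o(1)$, yields the claimed bound. The delicate point to get right is that the lower bound $c^{0.9\ell}$ on the surviving population must be proved for \emph{every} surviving realisation with high probability (not just in expectation), which I would handle by a second-moment / martingale argument on $W_t := Z_t/\mathbb{E}[Z_t]$ along the lines of the Kesten--Stigum theorem, truncated to the first $\ell$ generations, together with the observation that $0.9<1<1.1$ leaves room to absorb the $(1\pm o(1))$ factors in the per-generation mean.
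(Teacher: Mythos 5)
Your parts (a) and (b) match the paper's proof, and your overall strategy for (d) (couple the exploration with a supercritical Galton--Watson process, then use survival probability plus growth estimates) is the paper's strategy as well. But there is a genuine gap in (d) at the step where you lower-bound the candidate set: you justify $|Y_x|=(1-o(1))(d-|C|)$ by claiming that ``only $O(\ell\cdot\log d)=o(d)$ vertices are ever discovered, so the already-used neighbours are negligible.'' That count is false: the tree has height $\ell$ and branching up to $\log d$, so it may contain $(\log d)^{\Theta(\ell)}$ vertices, which for the values of $\ell$ this proposition is actually applied with ($\ell\approx 150\log d$ in Lemma \ref{l: second step}, $\ell\approx 10\log^4 d$ in Lemma \ref{l: extending t1 second}) is vastly larger than $d$; a priori, up to $d$ of $x$'s neighbours in $L_{j_x}$ could already sit in $A\cup B$ and be lost from $Y_x$. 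The missing idea --- and the very reason part (c) appears in the proposition --- is that (c) is what controls this: every already-discovered neighbour $y\in L_{j_x}$ of $x$ satisfies $c(x,y)\in C_x$, so excluding the coordinates of $C\cup C_x$ already accounts for all of them, giving $|Y_x|\ge d'-|C_x|$ with $|C_x|=o(\log^6 d)$ by part (a), independently of how large the tree has grown. Your write-up of (d) never invokes (c), and the substitute counting claim does not survive the relevant parameter range. Relatedly, treating the cap at $\log d$ as a total-variation $o(1)$ perturbation ``over all $O(\ell)$ levels'' is not sound either (the union bound is over vertices, whose number can be superpolynomial); it is also unnecessary, since the capped offspring law itself has mean at least $(1-o(1))p(d-|C|)-d^{-4}>1$, so one runs the branching-process analysis directly on the capped process, as the paper does.

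Part (c) as you wrote it also skips its crux. From the assumption $c(x,y)\notin C_x$ you assert that the branches to $x$ and $y$ ``only diverge at or after $x$,'' but this does not follow from ``no ancestor-edge of $x$ switched $c(x,y)$'': the branches may split at a proper ancestor $w$ of $x$, with first off-path edge $wy^-$, and then $c(w,y^-)\in I(y)$; to get a contradiction you must exclude $c(w,y^-)\in I(x)$, i.e.\ exclude that the $x$-branch later switches that coordinate on. This is precisely where the algorithm's rule that a vertex $s$ never spawns children along coordinates of $C_s$ must be used: it gives $c(w,y^-)\in C_{x^-}\subseteq C_s$ for every descendant $s$ of $x^-$, hence $c(w,y^-)\notin I(x)$, forcing $c(w,y^-)\in I(y)\setminus I(x)=\{c(x,y)\}$ and thus $c(x,y)\in C_x$. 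Without that step the contradiction is not established. Finally, your worry about a regime $e<\alpha\le 2$ is vacuous, since $e>2$: the hypotheses $\alpha>e$ and $|C|\le \frac d2+\sqrt d$ give $p(d-|C|)\ge(1-o(1))\frac{\alpha}{2}>1$ automatically, which is exactly why $\zeta_{p(d-|C|)}$ is well defined and why the paper's one-line supercriticality check suffices.
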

\begin{proof}
We prove each item separately.
\begin{enumerate}[(a)]
    \item The claim follows as we truncate at layer $L_i$, and the number of direct descendants of every $u\in V(T)$ in $T$, allowed by the Tree Construction algorithm, is at most $\log d$. 
    \item    If we start with $u_0$, then there exists some common ancestor $w\in V(T)$ of $w_1$ and $w_2$, and vertices $w_1^-,w_2^-$, such that $w<w_1^-\leq w_1$, $w<w_2^-\leq w_2$ and $ww_1^-,ww_2^- \in E(T)$. Let $\ell=c(w,w_2^-)$. Then $\ell\in I(w_2^-)\subseteq I(w_2)$. On the other hand, by definition $\ell\in C_T(w_1,0)$. Thus $I(w_2)\cap C_{T}(w_1,0)\neq \varnothing$. The statement for when starting with $u_1$ follows by symmetry.
    \item Let us consider the case that $r=u_0$, noting that the case $r=u_1$ follows from symmetric arguments. Let $y \in N_H(x)\cap L_{j_x}$ and suppose that $y\in A_x\cup B_x$ at that moment. Let $w$ be the unique vertex on the $x-y$ path spanned by $T$ which satisfies $w<x$ and $w<y$. Furthermore, let $x^-,y^-$ be such that $w<x^-\le x$, $w<y^-\le y$, and $wx^-, wy^-\in E(T)$ (see Figure \ref{f: tree_property}).
    Then $c(w,y^-) \in I(y^-)\subseteq I(y)$. In addition, by definition $c(w,y^-)\in C_{x^-}$. Since for any $s\in A_x$ we do not allow traversing from $s$ in $T$ along the coordinates of $C_s$, we have that $c(w,y^-)\in C_s$ for every descendant $s$ of $x^-$ in $T$, and hence $c(w,y^-)$ does not belong to $I(s)$ for any such $s$. In particular $c(w,y^-)\notin I(x)$. The above implies that $c(x,y)=c(w,y^-)\in C_x$ which completes the proof.
    \begin{figure}[H]
    \centering
    \includegraphics[width=0.4\textwidth]{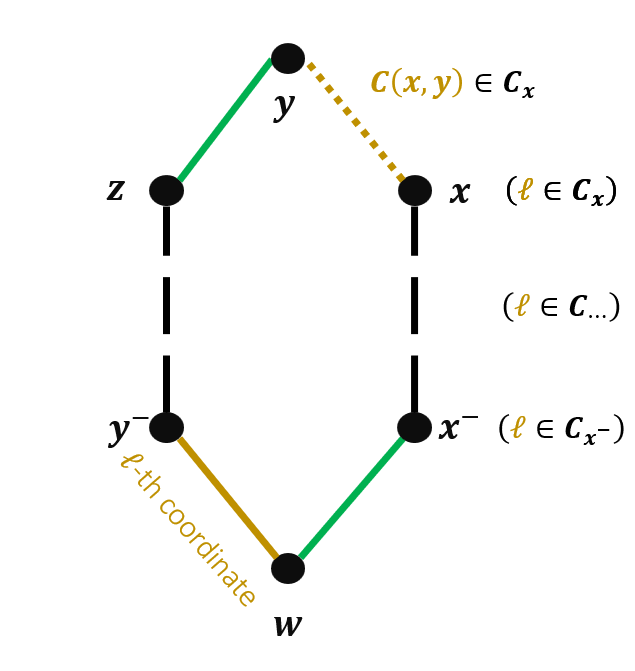}
    \caption{Illustration of the proof of Proposition \ref{proposition: tree construction}\ref{property: structural observation}. Here $c(w,y^{-})=\ell$. As for every $s\in A$, we do not traverse from $s$ on the coordinates of $C_s$ in $T$, we have that $\ell$ is in $C_s$ for every descendant $s$ of $x^-$. As both $zy$ and $xy$ are in $E(H)$, along any path from $x$ to $y$ the $\ell$-th coordinate must be traversed, and therefore $c(x,y)=\ell\in C_x$. Note that a similar argument can be made for $z$, and therefore $zy$ cannot, in fact, be in $T$. }
    \label{f: tree_property}
    \end{figure}
    \item For every $k\in [0,d]$ and every $x\in L_k$ one has that $|N_{L_{k+1}}(x)|=d-k$ and $|N_{L_{k-1}}(x)|=k$. Thus, suppose this is the first moment where $x$ is the first vertex in $A$, let $d_H$ be the dimension of $H$, and let
    \begin{align*}
        d'=\begin{cases} d_H-i-|C|, &\quad \text{if } v=u_0 \text{ and } x\notin L_{i-1},\\
        d_H-(d_H-i)-|C|, &\quad \text{if } v=u_1 \text{ and } x\notin L_{i+1},\\
        0 &\quad \text{otherwise.}
        \end{cases}
    \end{align*}
    Then, by Proposition \ref{proposition: tree construction}\ref{property: structural observation}, we have that $|Y_x|\ge d'-|C_x|$. Let $Z_x$ be the number of direct descendants of $x$ in the above process. Then, $Z_x$ stochastically dominates $\min\left\{\log d, Bin(d'-|C_x|,p)\right\}$. By Proposition \ref{proposition: tree construction}\ref{property: list}, since $L_i$ is at distance at most $\ell=o(\log^5d)$ from the root, we have that $|C_x|=o(\log^6d)$. Since $d_H=(1+o(1))d$, we have that $Z_x$ stochastically dominates $Z\sim \min\left\{\log d, Bin\left((1-o(1))(d-|C|),p\right)\right\}$. 
    By Lemma \ref{l: chernoff},
    \begin{align*}
    \mathbb{P}\left(Bin(d,p)\ge \log d\right)\le \left(\frac{\alpha e}{\log d}\right)^{\log d}<\frac{1}{d^5}.
    \end{align*}
    Thus, $$\mathbb{E}[Z]\ge (1-o(1))(d-|C|)p-d\cdot \frac{1}{d^5}\ge (1-o(1))(d-|C|)p.$$
    Since $|C|\le \frac{d}{2}+\sqrt{d}$ and $\alpha>e$, we have that $\mathbb{E}[Z]>1$. Furthermore, recall that $\ell=\omega(1)$. Therefore, standard results (see, for example, \cite{LPY95,D19}) imply that with probability at least $(1-o(1))\zeta_{(d-|C|)p}$, the number of leaves after exposing $\ell-1$ layers (that is, in the layer $L_{i-1}$ if we start from $u_0$, and in the layer $L_{i+1}$ if we start from $u_1$) is at least $\left(p(d-|C|)\right)^{0.9\ell}$. Similarly, $Z_x$ is stochastically dominated by $Bin(d,p)$, and thus the number of leaves after exposing $\ell-1$ layers is at most $\left(p(d-|C|)\right)^{1.1\ell}$. 
\end{enumerate}
\end{proof}

\section{Proof of Theorem \ref{th: main}\ref{subcritical}}\label{s: subcritical}
The proof follows from a first-moment argument. Let $X_{\delta d}$ be the number of increasing paths of length $\delta d$ in $Q^d_p$. We have at most $\binom{d}{\delta d}$ ways to choose the coordinates which change along the increasing path and $(\delta d)!$ ways to order them. We then have $2^{(1-\delta)d}$ ways to choose the coordinates which are fixed along the path. This determines a path of length $\delta d$, whose edges appear in $Q^d_p$ with probability $p^{\delta d-1}$. Therefore,
\begin{align*}
    \mathbb{E}\left[X_{\delta d}\right]&=2^{(1-\delta)d}\binom{d}{\delta d}(\delta d)!p^{\delta d-1}\\
        &=2^{(1-\delta)d}\frac{d!}{\left((1-\delta)d\right)!}\left(\frac{\alpha}{d}\right)^{\delta d-1}.
\end{align*}
By Stirling's approximation and since $\alpha<e$, we have that
\begin{align*}
    \mathbb{E}\left[X_{\delta d}\right]&\le 10d\cdot 2^{(1-\delta)d}\cdot \frac{(d/e)^d}{\left((1-\delta)d/e\right)^{(1-\delta)d}}\cdot \left(\frac{\alpha}{d}\right)^{\delta d}\\
    &\le 10d\cdot \left[\frac{\alpha}{e}\cdot \left(\frac{2}{1-\delta}\right)^{(1-\delta)/\delta}\right]^{\delta d}.
\end{align*}
Since $\lim_{\delta\rightarrow 1^-}\left(\frac{2}{1-\delta}\right)^{\frac{1-\delta}{\delta}}=1$, for any constant $\alpha<e$, we can choose a constant $\delta\in [0,1)$, sufficiently close to $1$, such that $\mathbb{E}\left[X_{\delta d}\right]= o(1)$. Thus, \textbf{whp}, there is no increasing path of length at least $\delta d$.

\section{Proof of Theorem \ref{th: main}\ref{supercritical}}\label{s: supercritical}
We focus mainly on showing that $\mathbb{P}\left(\ell(Q^d_p)=d\right)=(1+o(1))\zeta_{\alpha}^2$, as the other parts follow with a slight modification of the arguments (which we will argue for at the end of the section). We begin by showing, through a careful second-moment argument, that the existence of a path of length $d$ is not very unlikely (at least inverse-polynomial in $d$ in probability).

To the task at hand, we start with the second-moment argument.
\begin{lemma}\label{l: second moment}
Let $p=\frac{\alpha}{d}$ with $\alpha\ge e$. Then $\mathbb{P}\left(\ell(Q^d_p)=d\right)\ge \frac{1}{d^5}$.
\end{lemma}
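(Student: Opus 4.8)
The plan is to run a second-moment argument on the number $X$ of increasing paths of length $d$ in $Q^d_p$ between $\mathbf{0}$ and $\mathbf{1}$. Such a path is determined by a permutation $\pi$ of $[d]$, recording the order in which the coordinates are switched on; it appears in $Q^d_p$ with probability $p^d$. Hence $\mathbb{E}[X]=d!\,p^d=d!\,(\alpha/d)^d$, which by Stirling is $(1+o(1))\sqrt{2\pi d}\,(\alpha/e)^d$; since $\alpha\ge e$ this is at least polynomially large in $d$ (in fact $\ge \sqrt{d}$). The Paley--Zygmund inequality then gives $\mathbb{P}(X>0)\ge \mathbb{E}[X]^2/\mathbb{E}[X^2]$, so it suffices to show $\mathbb{E}[X^2]\le d^{5}\,\mathbb{E}[X]^2$ (and then $\mathbb{P}(\ell(Q^d_p)=d)\ge\mathbb{P}(X>0)\ge d^{-5}$, with room to spare).

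To bound the second moment, write $\mathbb{E}[X^2]=\sum_{\pi,\tau} p^{\,|E(\pi)\cup E(\tau)|}$ where $E(\pi)$ is the edge set of the path associated with $\pi$ and $|E(\pi)|=d$. Thus $\mathbb{E}[X^2]=\mathbb{E}[X]^2\sum_{\pi,\tau}\frac{1}{(d!)^2}p^{-|E(\pi)\cap E(\tau)|}=\mathbb{E}[X]^2\,\mathbb{E}_{\pi,\tau}\big[p^{-|E(\pi)\cap E(\tau)|}\big]$, with $\pi,\tau$ uniform and independent. So the goal reduces to showing $\mathbb{E}_{\pi,\tau}\big[(d/\alpha)^{|E(\pi)\cap E(\tau)|}\big]=O(d^5)$, in fact one should be able to get $O(1)$ or $O(\mathrm{polylog})$. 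The key combinatorial observation is that two increasing paths from $\mathbf{0}$ to $\mathbf{1}$ share the edge joining layers $L_{k-1}$ and $L_k$ precisely when $\pi$ and $\tau$ agree as sets on their first $k-1$ symbols and on the $k$-th symbol, i.e. $\{\pi(1),\dots,\pi(k)\}=\{\tau(1),\dots,\tau(k)\}$ and $\pi(k)=\tau(k)$. Consequently the common edges form a union of "prefix-agreement" and "suffix-agreement" intervals, and one can parametrize the overlap by the set of levels $k$ at which such an agreement occurs. For a fixed pattern with $j$ shared edges, a direct counting of the number of pairs $(\pi,\tau)$ realizing it, divided by $(d!)^2$, decays fast enough in $d$ to absorb the factor $(d/\alpha)^j$; the worst contributions come from very short shared prefixes/suffixes ($j$ constant), and summing those gives the polynomial-in-$d$ bound. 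A clean way to organize this: condition on the largest $a$ with $\{\pi(1),\dots,\pi(a)\}=\{\tau(1),\dots,\tau(a)\}$ being a "true prefix block" ending a maximal run, and symmetrically on a suffix block of size $b$; the probability that a uniform random pair has a common prefix block of size exactly $a$ is comparable to $1/a!$ up to lower-order terms, and the contributed weight $(d/\alpha)^{a}\cdot\frac{1}{a!}\cdot(\text{stuff})$ is summable, contributing $O(1)$; the genuinely dangerous term is when the overlap is a single edge in the "middle", contributing roughly $d$ copies of $d/\alpha$, i.e. $O(d^2)$, well within $d^5$.

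The main obstacle I anticipate is the bookkeeping of "middle" overlaps: a common edge between $L_{k-1}$ and $L_k$ for $k$ away from both ends does not force long prefix/suffix agreement, and a single pair $(\pi,\tau)$ can have several disjoint middle overlap edges, so the naive inclusion over "which edge is shared" overcounts and must be replaced by an inclusion--exclusion or a union bound that is tight enough. The way I would handle this is to bound $\mathbb{E}_{\pi,\tau}[(d/\alpha)^{|E(\pi)\cap E(\tau)|}]\le \mathbb{E}_{\pi,\tau}\big[\prod_{k=1}^{d}(1+(d/\alpha-1)\mathbf{1}[e_k\in E(\pi)\cap E(\tau)])\big]$ and expand, reducing everything to estimating, for each subset $S\subseteq[d]$ of "shared levels", the probability that all edges $e_k$, $k\in S$, are common; this probability factorizes nicely over the maximal runs of consecutive elements of $S$, each run of length $r$ starting at level $k$ contributing a factor on the order of $\binom{?}{?}^{-1}$ that beats $(d/\alpha)^{r}$. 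Provided this factorization is carried out carefully, summing over $S$ telescopes to a convergent series plus a handful of polynomially large terms, giving $\mathbb{E}[X^2]=O(d^{O(1)})\mathbb{E}[X]^2$ with the exponent comfortably below $5$, and the Paley--Zygmund bound then yields the claim. If the constant in the exponent turns out slightly larger than hoped, one can afford it: the statement only asks for $d^{-5}$, so even a crude $O(d^{4})$ bound on the normalized second moment suffices.
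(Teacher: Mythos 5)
Your overall strategy is exactly the paper's: count increasing paths of length $d$ as permutations of $[d]$, compute $\mathbb{E}[X]=d!p^d$, bound the normalized second moment $\mathbb{E}_{\pi,\tau}\bigl[(d/\alpha)^{|E(\pi)\cap E(\tau)|}\bigr]$ by a polynomial in $d$, and finish with Paley--Zygmund. Your description of when an edge is shared (prefix-set agreement at level $k-1$ plus $\pi(k)=\tau(k)$) is correct, and your "shared levels $S$" expansion is just the complementary parametrization of the paper's decomposition into maximal non-shared segments of length at least $2$ (for a fixed pattern the number of admissible $\tau$ is a product of factorials of the gap lengths, which is precisely the paper's $\prod_i x_i!$ count for $Y_k$). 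So the route would work.

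However, as written there is a genuine gap: the decisive estimate is never carried out (you literally leave "$\binom{?}{?}^{-1}$" and "provided this factorization is carried out carefully"), and the heuristics you substitute for it misidentify where the difficulty lies. The terms you flag as dangerous --- a single shared edge in the middle, or short shared prefixes/suffixes --- are in fact harmless: sharing the edge at level $k$ already costs a factor $\binom{d}{k-1}^{-1}(d-k+1)^{-1}$, so summing $(d/\alpha)$ times this over $k$ gives $O(1)$, not $O(d^2)$; likewise your "$1/a!$" for a common prefix block is wrong (pointwise agreement on the first $a$ positions has probability $(d-a)!/d!$, set agreement has probability $1/\binom{d}{a}$), though these errors are in the safe direction. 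The terms that actually require care are the near-diagonal ones, i.e.\ pairs sharing all but a few edges: already $S=[d]$ contributes $(d/\alpha)^d/d!\approx (e/\alpha)^d/\sqrt{2\pi d}$, which is bounded only because $\alpha\ge e$, and more generally a non-shared segment of length $x$ contributes roughly $x!\,(\alpha/d)^x$ against the saved weight, which is exactly the tight comparison the paper performs (its maximization $\prod_i x_i!$ with $x_i\ge 2$, and the step where $\alpha\ge e$ is invoked). Your sketch never uses the hypothesis $\alpha\ge e$ at all; since the lemma is false for $\alpha<e$ (by part (a) of the theorem), any complete argument must use it, so the computation you defer is not routine bookkeeping but the core of the proof and needs to be done explicitly along the lines above.
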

\begin{proof}
Let $X$ be the random variable counting the number of increasing paths of length $d$ in $Q^d_p$. By Paley-Zygmund,
\begin{align*}
    \mathbb{P}\left(X>0\right)\ge \frac{\mathbb{E}[X]^2}{\mathbb{E}[X^2]}.
\end{align*}
We have that
\begin{align}\label{eq: first moment}
    \mathbb{E}[X]=d!p^d.
\end{align}
Let us turn our attention to $\mathbb{E}[X^2]$. Let $\Pi$ be the set of all increasing paths of length $d$ in $Q^d$. Let $id\in \Pi$ be $id=\{v_0,\ldots, v_d\}$, where $v_i\in id$ is the vertex whose first $i$ coordinates are one, and the others are zero. Furthermore, given $\pi_1,\pi_2\in \Pi$, we stress that $\pi_1\cap \pi_2, \pi_1\cup \pi_2,$ and $\pi_1\setminus \pi_2$ are all with respect to the edges of $\pi_1$ and $\pi_2$. We then have:  
\begin{align}
    \mathbb{E}[X^2]&=\sum_{\pi_1,\pi_2\in \Pi}\mathbb{P}\left(\pi_1\in Q^d_p\land \pi_2\in Q^d_p\right)=\sum_{\pi_1,\pi_2\in \Pi}p^{|\pi_1\cup\pi_2|}\nonumber\\
    &=\sum_{\pi_1\in \Pi}p^d\cdot\sum_{\pi_2\in \Pi}p^{|\pi_2\setminus\pi_1|}=d!p^d\cdot\sum_{\pi\in \Pi}p^{|\pi\setminus id|}\nonumber\\
    &=d!p^d\sum_{\pi\in\Pi}p^{d-|\pi\cap id|}=d!p^{2d}\sum_{\pi\in \Pi}p^{-|\pi\cap id|}. \label{eq: key}
\end{align}
Now,
\begin{align}
    \sum_{\pi\in \Pi}\left(\frac{d}{\alpha}\right)^{|\pi\cap id|}=\sum_{k=0}^d\left(\frac{d}{\alpha}\right)^k\cdot Y_k, \label{eq: 1}
\end{align}
where $Y_k$ is the number of increasing paths which intersect with $id$ on exactly $k$ edges. 

Let us now estimate $Y_k$. When $k=d$, we have that $Y_k=1$. Suppose that $k<d$. Assume $\pi\in\Pi$ intersects with $id$ on exactly $k$ edges. There are $\ell\ge 1$ disjoint maximal segments of $id$ where $\pi$ does not intersect with $id$. By choosing the first and last vertex of each segment, there are at most $\binom{d}{2\ell}$ ways to choose where these segments lie. Let us denote the number of edges in each of these segments by $x_1,\ldots, x_{\ell}$, where $\sum_{i=1}^{\ell}x_i=d-k$ (note that when choosing where the segments lie, we determined $x_1,\ldots, x_{\ell})$. Since each maximal segment where two paths disagree has at least two edges, we have that $x_i\ge 2$ for $1\le i \le \ell$. In particular, $1\le \ell \le \frac{d-k}{2}$. There are at most $x_i!$ ways to form each segment. Therefore, for $k<d$,
\begin{align*}
    Y_k\le \sum_{\ell=1}^{\frac{d-k}{2}}\binom{d}{2\ell} \max_{\substack{x_1,\ldots,x_{\ell}\geq 2\\ \sum_{i=1}^{\ell}x_i=d-k}}\left\{\prod_{i=1}^\ell x_i!\right\}.
\end{align*}
Since for every $1\le i \le \ell$, we have that $x_i\ge 2$ and $\sum_{i=1}^{\ell}x_i=d-k$, we conclude that $\prod_{i=1}^{\ell}x_i!$ is maximised when every $x_i=2$ for all $i\in [\ell]$ but one of them, say $x_1$, which is equal to $d-k-2(\ell-1)$. 
Hence, for $k<d$,
\begin{align*}
    Y_k\le \sum_{\ell=1}^{\frac{d-k}{2}}\binom{d}{2\ell}2^{\ell-1}(d-k-2\ell+2)!\,.
\end{align*}

Returning to \eqref{eq: 1}, we have that
\begin{align}
    \sum_{k=0}^d\left(\frac{d}{\alpha}\right)^k\cdot Y_k &\le d!+\sum_{\ell=1}^{\frac{d}{2}}2^{\ell}\binom{d}{2\ell}\sum_{k=0}^{d-2\ell}\left(\frac{d}{\alpha}\right)^k(d-k-2\ell+2)!\,, \label{eq: 2}
\end{align}
where the first summand, $d!$, corresponds to the case where $k=d$ --- indeed, we use the fact that $\left(\frac{d}{\alpha}\right)^d\le \left(\frac{d}{e}\right)^d\le d!$ and that $Y_k=1$. Now,
\begin{align*}
   \sum_{k=0}^{d-2\ell}\left(\frac{d}{\alpha}\right)^k(d-k-2\ell+2)!&=\left(\frac{d}{\alpha}\right)^{d-2\ell}\sum_{m=0}^{d-2\ell}\left(\frac{\alpha}{d}\right)^m(m+2)!\\
   &\le \left(\frac{d}{\alpha}\right)^{d-2\ell}d^2\sum_{m=0}^{d-2\ell}\left(\frac{\alpha}{d}\right)^mm!\\
   &\le \left(\frac{d}{\alpha}\right)^{d-2\ell}d^3\sum_{m=0}^{d-2\ell}\left(\frac{\alpha}{d}\right)^m\left(\frac{m}{e}\right)^m\\
   &\le \left(\frac{d}{\alpha}\right)^{d-2\ell}d^4 \cdot \left(\frac{\alpha}{e}\right)^{d-2\ell} \\
   &\le \left(\frac{d}{e}\right)^{d-2\ell}d^4,
\end{align*}
where in the first inequality we used the fact that $\ell\ge 1$ and in the penultimate inequality we used our assumption $\alpha\ge e$ and the fact that $d\le m$.
Returning to \eqref{eq: 2}, we now have
\begin{align*}
    \sum_{k=0}^d\left(\frac{d}{\alpha}\right)^k\cdot Y_k&\le d!+\sum_{\ell=1}^{\frac{d}{2}}2^{\ell}\binom{d}{2\ell}\left(\frac{d}{e}\right)^{d-2\ell}d^4\\
   &\le d!+\left(\frac{d}{e}\right)^dd^4\sum_{\ell=1}^{\frac{d}{2}}\left(\frac{ed}{\ell}\cdot\frac{e}{d}\right)^{2\ell}\\
   &\le d!\cdot d^5.
\end{align*}
Substituting the above in \eqref{eq: key}, we obtain that
\begin{align*}
    \mathbb{E}[X^2]&\le \left(p^{d}d!\right)^2d^5.
\end{align*}
By Paley-Zygmund, the above, and \eqref{eq: first moment}, 
\begin{align*}
    \mathbb{P}\left(X>0\right)\ge \frac{1}{d^5}.
\end{align*}
\end{proof}
We note that in the proof above, we did not attempt to optimise the exponent in $d^{-5}$, and instead aimed for simplicity and clarity --- indeed, with more careful calculations one could obtain a better exponent, however, that does not affect the rest of the proof.

We now turn our attention to showing that if $\textbf{0}$ and $\textbf{1}$ both `percolate', then \textbf{whp} we can find $d^{10}$ pairwise disjoint subcubes, $Q(v_{1,0}, v_{1,1}),\ldots, Q(v_{d^{10},0},v_{d^{10},1})$, of dimension $(1-o_d(1))d$, such that there is a path in $Q^d_p$ between $\textbf{0}$ and every $v_{i,0}$, and between $\textbf{1}$ and every $v_{i,1}$. Showing the existence of such subcubes requires a delicate construction. Throughout the rest of this section, we assume that $p=\frac{\alpha}{d}$ for some $\alpha>e$, and recall that $\zeta_{\alpha}$ is defined according to \eqref{survival prob}.

We begin by showing that with probability at least $(1-o(1))\zeta_{\alpha}^2$, both $\textbf{0}$ and $\textbf{1}$ `percolate' in $Q^d_p$ and we can find a tree rooted at $\textbf{1}$ which is `above' the tree rooted at $\textbf{0}$. Formally,
\begin{lemma}\label{l: first step}
With probability at least $(1-o(1))\zeta_{\alpha}^2$, the following holds. There exist two trees $T_0', T_1'\in Q^d_p$ such that $T_0'$ has all its leaves at $L_{\log\log d}$, $T_1'$ has all its leaves at $L_{d-\log\log d}$, and:
\begin{enumerate}
    \item $\textbf{0}\in V(T_0'), \textbf{1}\in V(T_1')$; and,
    \item $|L_{\log\log d}\cap V(T_0')|\ge \log d$, $|L_{d-\log\log d}\cap V(T_1')|\ge \log d$; and,
    \item $|I(L_{\log\log d}\cap V(T_0'))|\le \log^{2\alpha}d$, $|[d]\setminus I(L_{d-\log\log d}\cap V(T_1'))|\le \log^{2\alpha}d$; and, 
    \item for every two leaves of $T_0'$, $w_{1,0},w_{2,0}\in V(T_0')$ with $w_{1,0}\neq w_{2,0}$, we have that $I(w_2)\cap C_{T_0'}(w_1,0)\neq\varnothing$; and,
    \item for every two leaves of $T_1'$, $w_{1,1}, w_{2,1}$ with $w_{1,1}\neq w_{2,1}$, we have that $([d]\setminus I(w_2))\cap C_{T_1'}(w_1,1)\neq\varnothing$; and,
    \item for every $v_0\in V(T_0')$, we have that $|C_{T_0'}(v_0,0)|\le \log^2d$, and for every $v_1\in V(T_1')$ we have that $|C_{T_1'}(v_1,1)|\le \log^2d$; and,
    \item for every $v_0\in V(T_0')$ and $v_1\in V(T_1')$, $v_0<v_1$.
\end{enumerate}
\end{lemma}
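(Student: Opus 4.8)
The plan is to construct $T_0'$ and $T_1'$ by running the Tree Construction algorithm in several stages, using Proposition \ref{proposition: tree construction} at each stage, and to carry out the constructions on disjoint coordinate blocks so that the final "above" requirement (item 7) becomes automatic. Concretely, I would first run the Tree Construction algorithm rooted at $\mathbf{0}$ with $H=Q^d$, an empty avoided-coordinate set $C=\varnothing$, and truncation layer $L_{\log\log d}$; by Proposition \ref{proposition: tree construction}\ref{property: queries} (with $\ell=\log\log d=\omega(1)$, $o(\log^5 d)$, and $|C|=0\le \tfrac d2+\sqrt d$), with probability $(1-o(1))\zeta_{pd}=(1-o(1))\zeta_\alpha$ the resulting tree $T_0'$ has between $(pd)^{0.9\log\log d}$ and $(pd)^{1.1\log\log d}$ leaves at $L_{\log\log d}$. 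Since $pd=\alpha>e>1$, $(pd)^{0.9\log\log d}\ge \log d$ for $d$ large, giving item 2. For item 3, each leaf of $T_0'$ has exactly $\log\log d$ ones, so $|I(L_{\log\log d}\cap V(T_0'))|\le \log\log d\cdot(pd)^{1.1\log\log d}=(\alpha)^{(1+o(1))\log\log d}\le \log^{2\alpha}d$ for $d$ large. Item 6 (and its $T_1'$ analogue) is Proposition \ref{proposition: tree construction}\ref{property: list} (bounding $\log\log d\cdot\log d\le\log^2 d$), item 4 is Proposition \ref{proposition: tree construction}\ref{property: leaves}, and the constraint $C_{T_0'}(v_0,0)\cap I(v_0)=\varnothing$ used implicitly throughout follows from the algorithm never traversing coordinates in $C\cup C_x$.

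Next I would run the algorithm rooted at $\mathbf{1}$, monotone decreasing, truncating at $L_{d-\log\log d}$, to get a tree $T_1'$ with the symmetric properties (items analogous to 2--6), again with probability $(1-o(1))\zeta_\alpha$; items 3 and 6 for $T_1'$ follow exactly as above by symmetry, replacing $I(v)$ with $[d]\setminus I(v)$. The two constructions use disjoint sets of random variables — the first touches only edges between $L_0$ and $L_{\log\log d}$, the second only edges between $L_{d-\log\log d}$ and $L_d$ — so the two success events are independent, and both hold simultaneously with probability $(1-o(1))\zeta_\alpha^2$, giving the claimed probability bound.

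The one genuinely nontrivial point is item 7: every vertex of $T_0'$ lies below every vertex of $T_1'$. A vertex $v_0\in V(T_0')$ satisfies $|I(v_0)|\le\log\log d$ and $I(v_0)\subseteq I(L_{\log\log d}\cap V(T_0'))$, a set $S_0$ of size at most $\log^{2\alpha}d$ by item 3; symmetrically every $v_1\in V(T_1')$ has $[d]\setminus I(v_1)$ contained in a set $S_1$ with $|S_1|\le\log^{2\alpha}d$. Since $|S_0|+|S_1|=O(\log^{2\alpha}d)=o(d)$, the set $S_0$ is disjoint from $S_1$ with... no — $S_0,S_1$ are random and we need them genuinely disjoint. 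The fix, as the proof outline indicates, is to force disjointness of the active coordinate blocks: I would reveal $S_0$ first (it is determined once $T_0'$ is built) and then run the $\mathbf{1}$-rooted construction with avoided set $C=S_0$, of size $o(d)\le \tfrac d2+\sqrt d$, so that no coordinate of $S_0$ is ever switched off along $T_1'$; then $[d]\setminus I(v_1)\subseteq [d]\setminus S_0$ — equivalently $S_0\subseteq I(v_1)$ — for every $v_1\in V(T_1')$, and since $I(v_0)\subseteq S_0\subseteq I(v_1)$ with $v_0\ne v_1$ (different layers), we get $v_0<v_1$. Feeding $C=S_0$ into the $\mathbf{1}$-construction still leaves $p(d-|C|)>1$ since $|C|=o(d)$, so Proposition \ref{proposition: tree construction}\ref{property: queries} still applies with success probability $(1-o(1))\zeta_{p(d-|C|)}=(1-o(1))\zeta_\alpha$ (using continuity of $\zeta$ and $p(d-|C|)=\alpha(1-o(1))$), and the independence of the two stages is preserved because, conditioned on $T_0'$, the second stage reveals only edges incident to layers $L_{d-\log\log d},\dots,L_d$, disjoint from those revealed in the first stage. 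The main obstacle is thus precisely this coordination between the two trees; everything else is a direct invocation of Proposition \ref{proposition: tree construction} together with the crude size bounds $(pd)^{O(\log\log d)}=\mathrm{polylog}(d)$.
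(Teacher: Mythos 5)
Your proposal is correct and follows essentially the same route as the paper: build $T_0'$ from $\mathbf{0}$ with the Tree Construction algorithm and empty avoided set, then build $T_1'$ from $\mathbf{1}$ avoiding the set $I_0=I(L_{\log\log d}\cap V(T_0'))$ so that item 7 holds by construction, with items 1--6 coming from Proposition \ref{proposition: tree construction}\ref{property: list}, \ref{property: leaves}, \ref{property: queries} and the probability $(1-o(1))\zeta_\alpha^2$ from the disjointness of the edge sets explored in the two stages. The only deviations (truncating at $L_{\log\log d}$ rather than $L_{\log\log d+1}$, and the mid-paragraph self-correction before arriving at the avoided-set fix) are immaterial.
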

\begin{proof}
Run the Tree Construction algorithm described in Section \ref{s: bfs} with the following inputs. Let \ref{input: subcube}, the subcube, be $Q^d$, let \ref{input: root}, the root, be $\textbf{0}$, let \ref{input: coordinates}, the set of coordinates which we avoid, be $\varnothing$, let \ref{input: layer}, the layer at which we truncate, be $L_{\log \log d+1}$, and let \ref{input: probability} be a sequence of Bernoulli$(p)$ random variables. Let $T_0'$ be the tree this algorithm outputs. 

Item $4$ follows deterministically from Proposition \ref{proposition: tree construction}\ref{property: leaves}, and item $6$, with respect to $T_0'$, follows deterministically from Proposition \ref{proposition: tree construction}\ref{property: list}. Note the dimension of the subcube in the algorithm's input is $d$, that we do not avoid any coordinates, and that the distance of the layer we truncate at from the root is $\log\log d+1$. Hence, we may apply Proposition \ref{proposition: tree construction}\ref{property: queries}. As $pd=\alpha$, items $1$, $2$, and $3$, with respect to $T_0'$, follow from Proposition \ref{proposition: tree construction}\ref{property: queries}. Let us denote the event that such $T_0'$ exists by $\mathcal{A}_0$, where we have that $\mathbb{P}(\mathcal{A}_0)\ge (1-o(1))\zeta_{\alpha}$ by \ref{proposition: tree construction}\ref{property: queries}.

Let $I_0\coloneqq I\left(V(T_0)\cap L_{\log\log d}\right)$. Conditioned on $\mathcal{A}_0$, we have that $|I_0|\le \log^{2\alpha}d$. We now run the Tree Construction algorithm described in Section \ref{s: bfs} with the following inputs. Let \ref{input: subcube}, the subcube, be $Q^d$, let \ref{input: root}, the root, be $\textbf{1}$, let \ref{input: coordinates}, the set of coordinates which we avoid, be $I_0$, let \ref{input: layer}, the layer at which we truncate, be $L_{d-\log \log d-1}$, and let \ref{input: probability} be a sequence of Bernoulli$(p)$ random variables. Let $T_1'$ be the tree this Tree Construction algorithm outputs. 

Note that item $7$ holds by construction. Item $5$ follows deterministically from Proposition \ref{proposition: tree construction}\ref{property: leaves}, and item $6$, with respect to $T_1'$, follows deterministically from Proposition \ref{proposition: tree construction}\ref{property: list}. Note that the dimension of the subcube in the algorithm's input is $d$, that $|I_0|\le \frac{d}{2}+\sqrt{d}$, and that the distance of the layer we truncate at from the root is $\log\log d+1$. Thus, we may apply Proposition \ref{proposition: tree construction}\ref{property: queries}. As $(d-|I_0|)p=(1+o(1))\alpha$, items $1$, $2$, and $3$, with respect to $T_1'$, follow from Proposition \ref{proposition: tree construction}\ref{property: queries}. Let us denote the event that such $T_1'$ exists by $\mathcal{A}_1$, and we have that, conditional on $\mathcal{A}_0$, the probability that $\mathcal{A}_1$ holds is at least $(1-o(1))\zeta_{(1-o(1))\alpha}=(1-o(1))\zeta_{\alpha}$ by \ref{proposition: tree construction}\ref{property: queries}.

Furthermore, the sets of edges explored during the construction of $T_0'$ and that during the construction of $T_1'$ are disjoint, and therefore the probability of both events $\mathcal{A}_0$ and $\mathcal{A}_1$ holding is at least $(1-o(1))\zeta_{\alpha}^2$.
\end{proof}

We now turn to show that \textbf{whp} we can extend $T_0'$ to a tree of height $O(\log d)$, with, say, $d^{11}$ leaves, such that the leaves are easily distinguishable. That is, our goal is that every leaf of $T_0'$ will be uniquely identified with a set of coordinates, whose order is, say, $150\log^{2}d$. We also aim to extend $T_1'$, maintaining all of its vertices above (the extension of) $T_0'$. We do so, roughly, by growing $T_0'$ on the first $d/2$ coordinates, and $T_1'$ on the last $d/2$ coordinates, utilising Proposition \ref{proposition: tree construction}\ref{property: queries}. More precisely,
\begin{lemma}\label{l: second step}
Suppose $T_0'$ and $T_1'$ satisfy the properties in the statement of Lemma \ref{l: first step}. Then, \textbf{whp}, there exist trees $T_0, T_1\in Q^d_p$ such that $T_0$ has all its leaves at $L_{150\log d}$, $T_1$ has all its leaves at $L_{d-150\log d}$, and:
\begin{enumerate}
    \item $\textbf{0}\in V(T_0)$ and $\textbf{1}\in V(T_1)$; and,
    \item $|L_{150\log d}\cap V(T_0)|\ge d^{11}$ and $|L_{d-150\log d}\cap V(T_1)|\ge d^{11}$; and,
    \item for every $v\in L_{150\log d}\cap V(T_0)$, we have that $|C_{T_0}(v,0)|\le 150\log^{2}d$; and,
    \item for every $v\in L_{d-150\log d}\cap V(T_1)$, we have that $|C_{T_1}(v,1)|\le 150\log^2d$; and,
    \item for every $u,v\in L_{150\log d}\cap V(T_0)$ with $u\neq v$, we have that $I(u)\cap C_{T_0}(v,0)\neq \varnothing$; and,
    \item for every $u,v\in L_{d-150\log d}\cap V(T_1)$ with $u\neq v$ we have that $([d]\setminus I(u))\cap C_{T_1}(v,1)\neq \varnothing$; and,
    \item for every $v_0\in V(T_0)$ and $v_1\in V(T_1)$, $v_0<v_1$.
\end{enumerate}
\end{lemma}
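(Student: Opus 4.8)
The plan is to extend $T_0'$ and $T_1'$ by running the Tree Construction algorithm of Section \ref{s: bfs} separately from each of their (at least $\log d$ many) leaves, and then to upgrade the merely constant success probability guaranteed by Proposition \ref{proposition: tree construction}\ref{property: queries} into a \textbf{whp} statement by arranging these $\log d$ runs to be mutually independent. Throughout, I condition on a fixed pair $T_0',T_1'$ as in Lemma \ref{l: first step}, and set $I_0:=I\big(L_{\log\log d}\cap V(T_0')\big)$ and $J_1:=[d]\setminus I\big(L_{d-\log\log d}\cap V(T_1')\big)$. Then $|I_0|,|J_1|\le \log^{2\alpha}d$, every $v_0\in V(T_0')$ has $I(v_0)\subseteq I_0$, and (since $T_1'$ was grown avoiding $I_0$) every $v_1\in V(T_1')$ has $[d]\setminus I(v_1)\subseteq J_1$ and $I_0\cap J_1=\varnothing$.

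\emph{Extending $T_0'$.} For each leaf $w$ of $T_0'$ I would run the Tree Construction algorithm on $Q(w,\textbf{1})$ rooted at $w$, with avoided coordinates $\big(\{\tfrac d2+1,\dots,d\}\cup J_1\cup I_0\big)\setminus I(w)$, truncation layer $L_{150\log d+1}$, and a fresh family of Bernoulli$(p)$ variables. Since $\dim Q(w,\textbf{1})=(1-o(1))d$, the avoided set has size at most $\tfrac d2+2\log^{2\alpha}d\le \tfrac d2+\sqrt d$, and the truncation layer is at distance $\ell:=150\log d-\log\log d$ from the root with $\ell=\omega(1)$ and $\ell=o(\log^5 d)$, Proposition \ref{proposition: tree construction}\ref{property: queries} applies; because $\alpha>e>2$ the branching parameter $p(d-|C|)=(1+o(1))\tfrac\alpha2$ exceeds $1$, so each run produces, with probability at least $(1+o(1))\zeta_{\alpha/2}\ge c$ for some constant $c=c(\alpha)>0$, a tree with between $\big(p(d-|C|)\big)^{0.9\ell}$ and $\big(pd\big)^{1.1\ell}$ leaves at $L_{150\log d}$; a short computation using $\alpha>e$ shows the lower bound already exceeds $d^{11}$ and the upper bound is $d^{O(1)}$. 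The crucial point is that the edge sets queried by the runs from distinct leaves $w\ne w'$ are disjoint: a common vertex $v$ would satisfy $v>w$ and $v>w'$, but avoiding $I_0\setminus I(w)$ forces $I(v)\cap I_0=I(w)$, and likewise $I(v)\cap I_0=I(w')$, a contradiction. Hence the $\ge\log d$ runs are mutually independent (and independent of the edges determining $T_0',T_1'$, which lie in layers $\le\log\log d$ or $\ge d-\log\log d$), so the probability that all of them fail is at most $(1-c)^{\log d}=o(1)$. I take $T_0$ to be $T_0'$ together with the tree output by one successful run. Then properties (1)--(2) are immediate, property (3) follows from Proposition \ref{proposition: tree construction}\ref{property: list} (truncation at $L_{150\log d}$) together with Lemma \ref{l: first step}(6), and property (5) follows from Proposition \ref{proposition: tree construction}\ref{property: leaves} applied to that run. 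By construction every $v_0\in V(T_0)$ satisfies $I(v_0)\subseteq I_0\cup\big([d/2]\setminus J_1\big)$.

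\emph{Extending $T_1'$.} This is the mirror image, carried out on the complementary coordinate half: from each leaf $w$ of $T_1'$ run the Tree Construction on $Q(\textbf{0},w)$ rooted at $w$, with avoided coordinates $\big([d/2]\cup I_0\cup J_1\big)\setminus\big([d]\setminus I(w)\big)$, truncation layer $L_{d-150\log d-1}$, and fresh Bernoulli$(p)$ variables. The same reasoning shows Proposition \ref{proposition: tree construction}\ref{property: queries} applies, that distinct runs are edge-disjoint (now because avoiding $J_1\setminus([d]\setminus I(w))$ pins $\big([d]\setminus I(v)\big)\cap J_1=[d]\setminus I(w)$), and that these runs are independent of all earlier randomness --- crucially, the $T_0$-extensions only query edges whose flip-coordinate lies in $[d/2]$, whereas the $T_1$-extensions only query edges whose flip-coordinate lies in $\{d/2+1,\dots,d\}$. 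Hence \textbf{whp} some run succeeds; let $T_1$ be $T_1'$ together with one such. Properties (1), (2), (4), (6) follow exactly as for $T_0$. Finally, every $v_1\in V(T_1)$ has $[d]\setminus I(v_1)\subseteq J_1\cup\big(\{d/2+1,\dots,d\}\setminus I_0\big)$, which is disjoint from $I_0\cup\big([d/2]\setminus J_1\big)\supseteq I(v_0)$; therefore $I(v_0)\subseteq I(v_1)$, with strict inclusion since $|I(v_0)|\le 150\log d<d-150\log d\le|I(v_1)|$, giving property (7).

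\emph{Main obstacle.} The probabilistic content is light --- it is just an independence-boosting of Proposition \ref{proposition: tree construction}\ref{property: queries}. The real work is choosing the avoided-coordinate sets so that four requirements hold at once: (i) the avoided set stays below $\tfrac d2+\sqrt d$ while the ambient subcube keeps dimension $(1-o(1))d$, so that Proposition \ref{proposition: tree construction}\ref{property: queries} is applicable at all; (ii) runs from distinct leaves of the same tree are edge-disjoint, which makes the $\log d$ trials independent; (iii) all $T_0$-extensions use coordinates from one half and all $T_1$-extensions from the other, which keeps $T_0$ below $T_1$; and (iv) the polylogarithmic exceptional sets $I_0,J_1$ carried over from Lemma \ref{l: first step} do not spoil (ii) or (iii). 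The single device that buys (ii) together with the compatibility part of (iii)--(iv) is to force every $T_0$-extension to avoid $I_0$ (beyond its own starting leaf's coordinates) and every $T_1$-extension to avoid $J_1$ similarly: this pins the intersection of any descendant's coordinate set with $I_0$ (resp. with $J_1$), which simultaneously yields the edge-disjointness and, incidentally, preserves the ``easily distinguishable'' property even across several successful extensions. The remaining care is purely in the list-size bookkeeping for properties (3) and (6); one should double-check that the truncation layer $L_{150\log d}$ and Proposition \ref{proposition: tree construction}\ref{property: list} combine to give the stated $150\log^2d$ (adjusting the constant if necessary).
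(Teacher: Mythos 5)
Your proposal is correct in substance and follows the same overall strategy as the paper: run the Tree Construction algorithm of Section \ref{s: bfs} from each of the $\ge\log d$ leaves of $T_0'$ (resp.\ $T_1'$) with an avoided set of size at most $\tfrac d2+\sqrt d$, confine the $T_0$-growth to one coordinate half and the $T_1$-growth to the other to preserve $T_0<T_1$, apply Proposition \ref{proposition: tree construction}\ref{property: queries} to each run, and boost the constant success probability to \textbf{whp} via disjointness of the runs. The one genuine difference is the disjointness mechanism: the paper confines the run from $u_i$ to an explicit subcube $Q(u_i,w_i)$ with $I(w_i)=[d]\setminus C_{T_0'}(u_i,0)$ and invokes the ``easily distinguishable leaves'' property of Lemma \ref{l: first step} to get pairwise vertex-disjoint subcubes, whereas you add $I_0\setminus I(w)$ to the avoided set so that every explored vertex $v$ has $I(v)\cap I_0=I(w)$, which pins the runs apart directly. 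Your device is valid and arguably more transparent; the paper's has the small advantage that $T_0$ is then taken to be just the $\textbf{0}$--$u_i$ path together with the successful run, so that $C_{T_0}(v,0)$ coincides with the list from that run, all leaves literally sit in the top layer, and the constant $150\log^2 d$ comes out cleanly --- in your version you should likewise prune $T_0'$ (and $T_1'$) to the single branch ending at the successful leaf, otherwise ``all leaves at $L_{150\log d}$'' fails and property (3) picks up the extra $\log^2 d$ from $C_{T_0'}(w,0)$, exactly the bookkeeping slack you flagged.

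One definitional point to fix: you set $J_1:=[d]\setminus I\big(L_{d-\log\log d}\cap V(T_1')\big)$, which with the paper's convention $I(S)=\bigcup_{v\in S}I(v)$ is the \emph{intersection} of the zero-sets of the leaves, and then your claims ``every $v_1\in V(T_1')$ has $[d]\setminus I(v_1)\subseteq J_1$'' and the pinning $\big([d]\setminus I(v)\big)\cap J_1=[d]\setminus I(w)$ are false as stated. What your argument needs is $J_1:=\bigcup_{v}\big([d]\setminus I(v)\big)$ over the leaves of $T_1'$; its polylogarithmic size does not follow verbatim from the listed properties of Lemma \ref{l: first step} but does follow from the upper bound on the number of leaves in Proposition \ref{proposition: tree construction}\ref{property: queries} (and the paper's own use of $I_1$ at this point implicitly requires the same reading), so this is a patch rather than a structural gap.
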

\begin{proof}
Note that when constructing $T_0'$ and $T_1'$, we only considered edges up to the $\log\log d$-th, and $d-\log\log d$-th layer (respectively). We may thus assume that edges crossing the other layers have not been exposed yet.

We begin by showing that \textbf{whp} $T_0$ exists. Let $I_1\coloneqq I\left(V(T_1')\cap L_{d-\log\log d}\right)$, where we note that by assumption $|I_1|\ge d-\log^{2\alpha}d$. Furthermore, since $T_0'$ satisfies the properties of Lemma \ref{l: first step}, we have that $\textbf{0}\in V(T_0')$ and $|L_{\log\log d}\cap V(T_0')|\ge \log d$. Let $U_0=\{u_1,\ldots, u_{\log d}\}$ be a set of arbitrary $\log d$ vertices from $L_{\log\log d}\cap V(T_0')$. For every $i\in [\log d]$, let $w_i$ be defined as $I(w_i)=[d]\setminus C_{T_0'}(u_i,0)$. Since $I(u_i)\cap C_{T_0'}(u_i,0)=\varnothing$, we have that $w_i>u_i$. By our assumption, for every two leaves $u_i\neq u_j\in V(T_0')\cap L_{\log\log d}$, we have that $I(u_i)\cap C_{T_0'}(u_j,0)\neq \varnothing$. Furthermore, by our assumption, for every $i\in[\log d]$, we have that $|C_{T_0'}(u_i,0)|\le \log^2d$. Thus, $Q(1)\coloneqq Q(u_1, w_1),\ldots, Q(\log d)\coloneqq Q(u_{\log d}, w_{\log d})$ form a set of $\log d$ pairwise disjoint subcubes of dimension at least $d-\log d-\log^2d\ge d-2\log^2d$. We claim that with probability bounded away from zero, there exists in $Q(i)_p$ a tree $B_i$, such that $B_i$ together with the path from $\textbf{0}$ to $u_i$ in $Q^d_p$ form a suitable choice for $T_0$. 

To that end, for every $i\in [\log d]$, we run the Tree Construction algorithm given in Section \ref{s: bfs} with the following inputs. Let \ref{input: subcube}, the subcube, be $Q(i)$, let \ref{input: root}, the root, be $u_i$, let \ref{input: coordinates}, the set of coordinates which we avoid, be $\left([d]\setminus I_1\right)\cup \left([d]\setminus [d/2]\right)$, let \ref{input: layer},  the layer at which we truncate, be $L_{150\log d}$, and let \ref{input: probability}, be a sequence of Bernoulli$(p)$ random variables. Let $B_i$ be the tree this algorithm outputs. 

Note that the subcube in the algorithm's input is of dimension $(1-o(1))d$, that the set of coordinates we avoid, $\left([d]\setminus I_1\right)\cup \left([d]\setminus [d/2]\right)$, is of size at most $\log^{2\alpha}d+\frac{d}{2}\le \frac{d}{2}+\sqrt{d}$, and that the distance of the layer we truncate at from the root is $150\log d-\log\log d$. Thus, we may apply Proposition~\ref{proposition: tree construction}\ref{property: queries}, and obtain that with probability at least $c$, for some constant $c>0$, $|L_{150\log d}\cap V(B_i)|\ge \left(\frac{2e}{5}\right)^{135\log d}\ge d^{11}$.

In the event $|L_{150\log d}\cap V(B_i)|\ge d^{11}$, note that letting $T_0$ be $B_i$ with the path from $\textbf{0}$ to $u_i$ in $Q^d_p$, for every $v\in V(B_i)$, we have that $C_{T_0}(v,0)=C_{B_i}(v,0)$, and thus by Proposition \ref{proposition: tree construction}\ref{property: list}, $|C_{T_0}(v,0)|\le 150\log^2d$. Since we restrict the queries to $\left([d/2]\cap I_1\right)$, for every $v_0\in V(T_0)$, we have that $I(v_0)\subseteq I_1$ and $I(v_0)\subseteq [d/2]\cup I(V(T_0'))$, where the first further implies that for every $v_1\in V(T_1')$, we have $v_0<v_1$. Finally, we need to verify that for every $u,v\in L_{150\log d}\cap V(T_0)$ with $u\neq v$, we have that $I(u)\cap C_{T_0}(v,0)\neq \varnothing$ --- indeed, this follows from Proposition \ref{proposition: tree construction}\ref{property: leaves}. Therefore, with probability at least $c>0$, the path from $\textbf{0}$ to $u_i$ in $Q^d_p$ appended with $B_i$ forms a suitable choice for $T_0$.

Since these events are independent for every $i\in [\log d]$, the probability there is no such $T_0$ is at most $(1-c)^{\log d}=o_d(1)$.

The existence of $T_1$ follows similarly, where we note that $I(V(T_0))\subseteq I(V(T_0'))\cup [d/2]$ and $|I(V(T_0'))|\le \log^{2\alpha}d=o(d)$.
\end{proof}

We are now ready to show that \textbf{whp} we can extend $T_1$, so that we can find $d^{10}$ pairwise disjoint subcubes, of dimension $(1-o(1))d$, with their antipodal points connected to $\textbf{0}$ and $\textbf{1}$.
\begin{lemma}\label{l: extending t1 second}
Suppose $T_0$ and $T_1$ satisfy the properties of Lemma \ref{l: second step}. Then, \textbf{whp}, we can find subsets $V_0=\{v_{1,0},\ldots, v_{d^{10},0}\}\subseteq V(T_0)\cap L_{150\log d}$ and $V_1=\{v_{1,1},\ldots, v_{d^{10},1}\}$, with $V_1$ being at layer at least $d-15^3\log^{6}d$, such that the following holds.
\begin{enumerate}
    \item For every $v_0\in V_0$ and $v_1\in V_1$, there is a path in $Q^d_p$ between $\textbf{0}$ and $v_0$ and between $\textbf{1}$ and $v_1$.
    \item for every $i\in [d^{10}]$, we have that $v_{i,0}<v_{i,1}$; and,
    \item for every $i\in [d^{10}]$, we have that $C_{T_0}(v_{i,0},0)\cap I(v_{i,1})=\varnothing$.
\end{enumerate}
\end{lemma}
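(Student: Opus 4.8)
The plan is to carry out the strategy of Section~\ref{s: outline}. First, let $V_0=\{v_{1,0},\dots,v_{d^{10},0}\}$ be any $d^{10}$ of the (at least $d^{11}$) leaves of $T_0$ in $L_{150\log d}$, partition $d^{11}$ of the leaves of $T_1$ in $L_{d-150\log d}$ into $d^{10}$ groups $W_1,\dots,W_{d^{10}}$ of size $d$, and associate the group $W_i$ with $v_{i,0}$. Write $C_i:=C_{T_0}(v_{i,0},0)$; by Lemma~\ref{l: second step} we have $|C_i|\le 150\log^2 d$ and $C_i\cap I(v_{i,0})=\varnothing$, and by the same lemma $v_{i,0}<w$, i.e.\ $I(v_{i,0})\subseteq I(w)$, for every $w\in W_i$. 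Given $i$ and a leaf $w\in W_i$, the goal of the corresponding ``trial'' is to grow inside $Q^d_p$ a monotone decreasing tree $B$ rooted at $w$ such that some vertex $v_1$ in the lowest layer of $B$ satisfies $I(v_{i,0})\subseteq I(v_1)$ and $C_i\cap I(v_1)=\varnothing$. If the trial succeeds we set $v_{i,1}:=v_1$: then $v_{i,1}$ lies at layer $d-150\log d-\ell\ge d-15^3\log^6 d$ (with $\ell$ as specified below), $v_{i,0}<v_{i,1}$, $C_i\cap I(v_{i,1})=\varnothing$, and concatenating the path from $\textbf{1}$ to $w$ inside $T_1$ with the path from $w$ to $v_{i,1}$ inside $B$ joins $\textbf{1}$ to $v_{i,1}$ in $Q^d_p$, while $T_0$ joins $\textbf{0}$ to $v_{i,0}$; these are exactly the three asserted properties. (By the distinguishability of the leaves of $T_0$, Lemma~\ref{l: second step}(5), such a $v_{i,1}$ lies above $v_{i,0}$ but below no other leaf of $T_0$ --- the property that will later make the subcubes $Q(v_{i,0},v_{i,1})$ pairwise disjoint.)

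To carry out a trial, I would run the Tree Construction algorithm of Section~\ref{s: bfs} on the subcube $Q(v_{i,0},w)$, which has dimension $d-300\log d=(1-o(1))d$, with root $w$, with \ref{input: coordinates}, the set of avoided coordinates, being $[d]\setminus S_i$, where $S_i:=\big([d/2+1,d]\setminus I(v_{i,0})\big)\cup C_i$, and with \ref{input: layer}, the truncation layer, being $L_{d-150\log d-\ell}$, where $\ell:=C_1\log^3 d$ for a suitable constant $C_1=C_1(\alpha)$. The only coordinates the algorithm may then remove are those in $S_i$: this set is disjoint from $I(v_{i,0})$, so $B$ stays above $v_{i,0}$; it contains $C_i$, so the (at most $|C_i|\le 150\log^2 d$) coordinates of $C_i$ that are $1$ in $w$ are eligible for removal; and $|[d]\setminus S_i|\le d/2+o(d)\le d/2+\sqrt d$, so Proposition~\ref{proposition: tree construction}\ref{property: queries} applies, the exploration process is supercritical with branching rate at least $p\big(d/2-o(d)\big)\ge\alpha/2-o(1)>1$ (since $\alpha>e>2$), and $\ell=o(\log^5 d)$, $\ell=\omega(1)$. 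Thus with probability at least some absolute constant $c_0>0$ the tree $B$ has at least $M:=(\alpha/2-o(1))^{0.9\ell}$ leaves in its lowest layer, all of them above $v_{i,0}$; taking $C_1$ large enough we may assume $M\ge d^{200\log^2 d}$.

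The heart of the argument is to show that, conditioned on $B$ having at least $M$ leaves in its lowest layer, with probability $1-o(1)$ at least one of them is zero on every coordinate of $C_i$. The first moment is encouraging: a leaf of $B$ has removed (relative to $w$) $\ell$ of the $\approx d/2$ coordinates in $S_i$, so it is zero on all $\le 150\log^2 d$ coordinates of $C_i$ with probability about $(2\ell/d)^{|C_i|}=d^{-(1-o(1))|C_i|}$, whence the expected number of such leaves is at least $M\cdot d^{-(1-o(1))|C_i|}\ge d^{\Omega(\log^2 d)}\to\infty$. Upgrading this to a high-probability statement is the delicate point, since the $M$ leaves are heavily correlated through shared ancestry; I would do it either by a second-moment estimate on the number of ``good'' leaves (with the variance controlled by the distribution of the depth of the lowest common ancestor of two random leaves), or, more robustly, by following the sub-process of lineages that successively remove the coordinates of $C_i$: a vertex still missing $m'$ of these coordinates has, among its $\approx d/2$ potential children, the $m'$ ``progress'' children each present independently with probability $p$, and the ambient supercriticality (rate $\ge\alpha/2>1$) lets one maintain a growing population of lineages having removed $1,2,\dots,|C_i|$ of the targets, so that after $\ell=\Theta(\log^3 d)$ generations some lineage has removed all $|C_i|$ of them. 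Granting this, a single trial succeeds with probability at least $c_0(1-o(1))\ge c>0$. I would then expose the $d\cdot d^{10}$ trials one by one in a fixed order, each time querying only edges not queried before, and argue that each trial still succeeds with conditional probability at least $c$ given the earlier ones (see the remark on sequential exposure below); for each fixed $i$ the probability that all $d$ of its trials fail is then at most $(1-c)^d$, and a union bound over $i\in[d^{10}]$ completes the proof.

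I expect the main obstacle to be the high-probability version of the claim in the previous paragraph --- that the large but strongly correlated exploration tree $B$ \textbf{whp} contains a leaf that vanishes on the prescribed $\le 150\log^2 d$ coordinates $C_i$ --- for which a naive independence argument is unavailable and one must exploit the branching structure, via either a second moment or the multi-type ``progress'' branching process above. A secondary technical point concerns the sequential exposure: the $d\cdot d^{10}$ exploration trees need not be pairwise vertex-disjoint (the parameters of Lemma~\ref{l: second step} do not permit that), so one must check that conditioning on the previously revealed edges does not spoil the supercritical lower bound on the growth of a new tree --- here one uses that each trial's root is a distinct, hence previously untouched, leaf of $T_1$, and that each tree explores only $\Theta(\log^3 d)$ layers, so the collection of vertices spoiled by earlier explorations is negligible next to $M$.
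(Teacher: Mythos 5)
Your overall architecture matches the paper's (pick $d^{10}$ leaves of $T_0$, assign to each one $d$ leaves of $T_1$, grow from such a leaf a decreasing tree that stays above $v_{i,0}$, locate inside it a vertex avoiding $C_{T_0}(v_{i,0},0)$, then union bound), but the step that constitutes the actual content of the lemma is not proved: you explicitly write ``Granting this'' for the claim that the exploration tree $B$ contains, with probability bounded away from $0$, a leaf that is zero on every coordinate of $C_i$. The two routes you mention (a second moment governed by the lowest common ancestor, or a multi-type ``progress'' process) are only named, not carried out, and neither is routine: the per-generation probability that a given lineage removes one prescribed coordinate is only $\Theta(\alpha/d)$, so progress events are rare and must be paid for by population growth, which is exactly the delicate bookkeeping the lemma requires. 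The paper resolves this by decoupling the two tasks. It first grows the tree (avoiding only $I(v_{i,0})$, hence at rate $\approx\alpha$) to depth $10\log^4 d$, obtaining at least $(13/10)^{9\log^4 d}$ vertices at layer $L_{d-10\log^4 d}$, all above $v_{i,0}$ and all joined to $\textbf{1}$. It then removes the at most $150\log^2 d$ coordinates of $C_{T_0}(v_{i,0})$ one at a time: for each coordinate, either a $\frac{1}{d}$-fraction of the current population already has it equal to $0$, or else each vertex in the $(1-\frac{1}{d})$-fraction having it equal to $1$ can flip it through a single previously unqueried edge, present independently with probability $p$, and Chernoff keeps a $\frac{1}{2d}$-fraction alive with failure probability $o(d^{-11})$ as long as the population is at least $d^2$. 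Losing a factor $2d$ per coordinate is affordable because $(13/10)^{9\log^4 d}\gg (2d)^{150\log^2 d}$. Your plan asks a single exploration tree of depth $\Theta(\log^3 d)$ to do both jobs at once, which is precisely the correlation problem you flag; without this two-phase (or an equivalent carefully staged) argument the proof is incomplete.

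A second gap is your treatment of the $d$ trials attached to a fixed $v_{i,0}$. Your subcubes $Q(v_{i,0},w)$ for distinct leaves $w\in W_i$ need not be vertex disjoint, so the trials are not independent, and the remark that ``the vertices spoiled by earlier explorations are negligible'' does not by itself restore the lower bound of Proposition \ref{proposition: tree construction}\ref{property: queries}, whose proof assumes every queried edge is fresh. The paper avoids this entirely by rooting the $k$-th trial in the subcube $Q(w_k,u_k)$ with $I(w_k)=C_{T_1}(u_k,1)$: by the distinguishability of the leaves of $T_1$ (Lemma \ref{l: second step}, item 6) these subcubes are pairwise vertex disjoint, so the $d$ trials are genuinely independent and the $(1-c)^d$ failure bound is immediate. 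This use of the $C_{T_1}$ lists is missing from your proposal (you use only the $C_{T_0}$ lists, and only at the end).
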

\begin{proof}
Once again, we stress that we have not queried any of the edges between the $150\log d$-th and the $d-150\log d$-th layers thus far.

Let $V_0=\{v_{1,0},\ldots, v_{d^{10},0}\}$ be an arbitrary set of $d^{10}$ vertices in $V(T_0)\cap L_{150\log d}$. Note that $|I(v_{i,0})|=150\log d$ for every $i\in[d^{10}]$.

Let $M=\{m_1, \ldots, m_{d^{10}\cdot d}\}$ be an arbitrary set of $d^{11}$ vertices in $L_{d-150\log d}\cap V(T_1)$. We arbitrarily split them to sets $S_1, \ldots, S_{d^{10}}$, each of order $d$. For every $i\in [d^{10}]$, we associate $S_i$ with $v_{i,0}$. We now turn to show that \textbf{whp}, for every $i\in [d^{10}]$ there exists a tree $T(S_i)$ in $Q^d_p$, rooted at some vertex in $S_i$, such that at least one of its vertices $y\in V\left(T(S_i)\right)\cap\bigcup_{s=d-15^3\log^6d}^{d}L_s$, satisfies that $v_{i,0}<y$ and $I(y)\cap C_{T_0}(v_{i,0})=\varnothing$. 

Fix $i\in [d^{10}]$. Denote the vertices of $S_i$ by $\{u_1, \ldots, u_{d}\}$. For every $k\in [d]$, let $w_k$ be the vertex in $Q^d$ defined by $I(w_k)=C_{T_1}(u_k,1)$. Similar to the proof of Lemma \ref{l: second step}, we have that $Q(1)\coloneqq Q(w_1,u_1),\ldots, Q(d)\coloneqq Q(w_d, u_d)$ form $d$ pairwise vertex disjoint subcubes. By our assumption, $|C_{T_1'}(u_k,1)|\le 150\log^2d$, and thus each of these subcubes is of dimension at least $d-200\log^2d$. Fix $k\in [d]$ and consider $Q(k)$. Run the Tree Construction algorithm given in Section \ref{s: bfs} with the following inputs. Let \ref{input: subcube}, the subcube, be $Q(k)$, let \ref{input: root}, the root, be $u_k$, let \ref{input: coordinates}, the set of coordinates which we avoid, be $I(v_{i,0})$, let \ref{input: layer},  the layer at which we truncate, be $L_{d-10\log^4 d-1}$, and let \ref{input: probability} be a sequence of Bernoulli$(p)$ random variables. Let $B_k$ be the tree this algorithm outputs. Similarly to previous arguments, by Proposition \ref{proposition: tree construction}\ref{property: queries}, since $(d-20\log^4d)p\ge \frac{\alpha}{2}>\frac{13}{10}$, with probability at least $c$, for some positive constant $c$, we have $|V_{d-\log^4 d}\cap V(B_k)|\ge \left(\frac{13}{10}\right)^{9\log^4d}$. Note that every vertex in $B_k$ is above $v_{i,0}$. Thus, with probability at least $1-(1-c)^{d}=1-o\left(d^{-11}\right)$, there exists a set of $\left(\frac{13}{10}\right)^{9\log^4d}$ vertices in layer $L_{d-10\log^4d}$, all of which are connected to $\textbf{1}$ in $Q^d_p$ and all of which are above $v_{i,0}$. Denote this set of vertices by $X$.

Initialise $X(0)\coloneqq X$ and $C(0)\coloneqq C_{T_0}(v_{i,0})$. Now, at each iteration $j\in [|C(0)|]$, we proceed as follows. Let $\ell$ be the first (smallest) coordinate in $C(j-1)$. If at least $\frac{|X(j-1)|}{d}$ vertices $x\in X(j-1)$ have that $\ell \in [d]\setminus I(x)$, we set $X(j)$ to be these set of vertices and update $C(j)=C(j-1)\setminus \{\ell\}$. Otherwise, there are at least $\left(1-\frac{1}{d}\right)|X(j-1)|$ vertices $x\in X(j-1)$, such that $\ell\in I(x)$. Each vertex $x$ has at least one neighbour $x'$ in the layer below, such that $\ell\in [d]\setminus I(x')$. The probability that $xx'$ is in $Q^d_p$ is $p$, and these are independent trials for every vertex $x$. Thus, the number of $x'$ such that $\ell\in[d]\setminus I(x')$ and that there is $x\in X(j-1)$ such that $xx'\in E(Q^d_p)$ stochastically dominates $Bin\left(\left(1-\frac{1}{d}\right)|X(j-1)|,p\right)$. By Lemma \ref{l: chernoff}, with probability at least $1-\exp\left(-\frac{|X(j-1)|}{10d}\right)$, we have that the number of such $x'$ is at least $\frac{|X(j-1)|}{2d}$, where we stress that if $|X(j-1)|\ge d^2$, this holds with probability at least $1-o(d^{-11})$. We then let $X(j)$ be the set of such $x'$, and update $C(j)=C(j-1)\setminus\{\ell\}$. Repeating the above process for $|C(0)|\le 150\log^2d$ iterations, by the union bound we have that with probability at least $1-o(\log^2d\cdot d^{-11})$ at each iteration, $|X(j)|\ge \frac{|X(j-1)|}{2d}$, and in particular, $$|X(|C(0)|)|\ge \frac{\left(\frac{13}{10}\right)^{9\log^4d}}{(2d)^{\log^2d}}\ge d^2.$$
Thus, with probability at least $1-o(d^{-10})$ there exists a proper choice for $v_{i,1}$ at layer at least $d-10\log^4d\cdot150\log^2d\ge d-15^3\log^6d$. Union bound over the $d^{10}$ choices of $i$ completes the proof.
\end{proof}

We are now ready to prove Theorem \ref{th: main}\ref{supercritical}.
\begin{proof}[Proof of Theorem \ref{th: main}\ref{supercritical}]
By Lemmas \ref{l: first step} through \ref{l: extending t1 second}, with probability at least $(1-o(1))\zeta_{\alpha}^2-o(1)=(1-o(1))\zeta_{\alpha}^2$, we can find subsets $V_0=\{v_{1,0},\ldots, v_{d^{10},0}\}\subseteq V(T_0')\cap L_{150\log d}$ and $V_1=\{v_{1,1},\ldots, v_{d^{10},1}\}$, with $V_1$ being at layer at least $d-15^3\log^{6}d$, such that the following holds.
\begin{enumerate}
    \item For every $v_0\in V_0$ and $v_1\in V_1$, we have that there is a path in $Q^d_p$ between $\textbf{0}$ and $v_0$ and between $\textbf{1}$ and $v_1$.
    \item for every $i\in [d^{10}]$, we have that $v_{i,0}<v_{i,1}$; and,
    \item for every $i,j\in [d^{10}]$ with $i\neq j$, we have that $C_{T_0}(v_{i,0},0)\cap I(v_{i,1})=\varnothing$.
\end{enumerate}
By our construction of $T_0$, for every $u,v\in V(T_0)$ we have that $I(u)\cap C_{T_0}(v,0)\neq \varnothing$. This implies that $I(u)\cap \left([d]\setminus I(v)\right)\neq \varnothing$. Therefore, for every $i,j\in [d^{10}]$ with $i\neq j$, we have that the subcubes $Q(v_{i,0},v_{i,1})$ and $Q(v_{j,0},v_{j,1})$ are disjoint. 

For every $i\in[d^{10}]$, let $H(i)=Q(v_{i,0},v_{i,1})$. By Lemma \ref{l: second moment}, with probability at least $\frac{1}{d^5}$, there is a path in $H(i)_p$ between $v_{i,0}$ and $v_{i,1}$, Thus, the probability that there is no path between $\textbf{0}$ and $\textbf{1}$ in $Q^d_p$ is at most $$1-(1-o(1))\zeta_{\alpha}^2+\left(1-\frac{1}{d^5}\right)^{d^{10}}=1-(1-o(1))\zeta_{\alpha}^2.$$
Standard results (see, for example, \cite{D19}) implies that there does not exist a path in $Q^d_p$ from $\textbf{0}$ to any vertex in $L_{\log d}$ with probability at least $(1+o(1))(1-\zeta_{\alpha})$. Similarly, the probability there is no path from $\textbf{1}$ to any vertex in $L_{d-\log d}$ is at least $(1+o(1))(1-\zeta_{\alpha})$. Noting that these two events are independent, and since the function $f(x)=x^2+2x(1-x)$ is increasing in the interval $(0,1)$, the probability that at least one such path does not exist is at least 
$$(1-\zeta_{\alpha})^2+2\zeta_{\alpha}(1-\zeta_{\alpha})-o(1)=(1+o(1))\left(1-\zeta_{\alpha}^2\right).$$
Therefore, $\mathbb{P}\left(\ell(Q^d_p)=d\right)=(1+o(1))\zeta_{\alpha}^2$. 

The statements for $d-1, d-2$ follow similarly. For $d-1$, in Lemma \ref{l: first step} we can grow $T_0'$ (or $T_1'$) from $L_1$ (or, respectively, $L_{d-1}$), where in a manner similar to the above Lemmas we can show that \textbf{whp} at least one of the vertices from these layers succeeds. As for $d-2$, we can do so from both layers and thus the statement holds \textbf{whp}. 
\end{proof}

\begin{remark}\label{r: distribution}
Given any pair of antipodal vertices $(u_0, u_1)$, one can define a $u_0-u_1$ increasing path: we say that $P=\{v_1, \ldots, v_i, v_{i+1}, \ldots, v_d\}$, where $v_1=u_0$ and $v_d=u_1$, is $u_0-u_1$ increasing if the for every $i\in [d]$, $v_{i+1}$ is closer (in Hamming distance) to $u_1$ than $v_i$ is. The above proof suggests that given two pairs of antipodal vertices, $(u_0,u_1)$ and $(v_0,v_1)$, we can construct a $u_0-u_1$ increasing path and a $v_0-v_1$ increasing path independently with probability $(1+o(1))\zeta_{\alpha}^4$ if the Hamming distance of $u_0$ and $v_0$ is at least $\log^7d$. Given a vertex $v$, the number of vertices at distance at most $\log^7d$ from $v$ is $O\left(\binom{d}{\log^7d}\right)$. As $O\left(\binom{d}{\log^7d}\right)=o(2^{d})$, it follows from Chebyshev's inequality that \textbf{whp} the number of increasing antipodal paths in $Q^d_p$ is $(1+o(1))\zeta_{\alpha}^22^{d-1}$. Furthermore, let $Z$ be the number of increasing antipodal paths in $Q^d_p$ and let $\overline{Z}=(Z-\zeta_{\alpha}^22^{d-1})/\sqrt{\zeta_{\alpha}^2(1-\zeta_{\alpha}^2)2^{d-1}}$. By calculating higher moments of $\overline{Z}$ one can show that $\overline{Z}$ converges in distribution to the standard normal distribution (see also \cite[Theorem 1.4 part a-ii]{R13}). 
\end{remark}

\paragraph{Acknowledgement} The authors wish to thank Ross Pinsky for his comments on an earlier version of the paper, and for bringing reference \cite{R13} to our attention. 
\bibliographystyle{abbrv}
\bibliography{perc} 
\end{document}